\newtheorem{thm}{Theorem}[section]
\newtheorem{cor}[thm]{Corollary}
\newtheorem{prop}[thm]{Proposition}
\newtheorem{lemma}[thm]{Lemma}
\newtheorem{rmk}[thm]{Remark}
\DeclareMathOperator*{\im}{im}
\DeclareMathOperator*{\Isom}{Isom}
\DeclareMathOperator*{\Diffeo}{Diffeo}
\DeclareMathOperator*{\Out}{Out}
\DeclareMathOperator*{\Aut}{Aut}
\DeclareMathOperator*{\InnerAut}{InnerAut}
\DeclareMathOperator*{\Stab}{Stab}
\DeclareMathOperator*{\Dom}{Dom}
\DeclareMathOperator*{\Hom}{Hom}
\newcommand {\cH} {\mathcal{H}}
\newcommand {\C} {{\mathbb C}}
\newcommand {\R} {{\mathbb R}}
\newcommand {\Z} {{\mathbb Z}}
\newcommand {\Q} {{\mathbb Q}}
\newcommand {\G} {{\mathbb G}}
\begin{document}
\title[Fibered fundamental groups]{Toward the structure of fibered fundamental groups of projective varieties}
\author{
        Donu Arapura    
}
 \thanks {Partially supported by the NSF }
\address{Department of Mathematics\\
 Purdue University\\
 West Lafayette, IN 47907\\
U.S.A.}
 \maketitle

 A useful dichotomy for groups is to subdivide
them  into large groups and small, where a  group is  large for our purposes if it surjects onto a nonabelian free group.
We want to study the large groups in the class
$\mathcal{P}$ of fundamental groups of complex smooth projective varieties.
Standard tricks of the trade, going back to Beauville, Catanese and
Siu \cite[chapter 2]{abc} and \cite[\S 5.1]{catanese}, show that such a group is  {\em fibered} in the sense that it
is given as an extension of an  orbifold group
\begin{equation*}
  \begin{split}
  \Gamma_{g;\vec{m}} =&\langle \alpha_1,\ldots, \alpha_{2g},\beta_1,\ldots,\beta_n\mid\\
&[\alpha_1,\alpha_2]\ldots [\alpha_{2g-1}, \alpha_{2g}] \beta_1\ldots \beta_n= \beta_1^{m_1}=\ldots \beta_n^{m_n}=1\rangle
\end{split}
\end{equation*}
 by a finitely generated group $K$. So we now come to the main
question that motivated this paper:
given an action $\Gamma_{g;\vec{m}}\to
\Aut(K)$, or perhaps only an outer action, with $K$ finitely generated, when can we expect the
semidirect product or some other associated extension to lie in
$\mathcal{P}$?  
  The group $\Gamma_{g;\vec{m}}$ will act on the finite
dimensional vector space $V=K/[K,K]\otimes \Q$. Let $G$ be the 
identity component of the Zariski closure of
the image of $\Gamma_{g;\vec{m}}$ in $GL(V)$.
We establish the
following necessary conditions for an extension of $\Gamma_{g;\vec{m}}$ by $K$
to lie in $\mathcal{P}$:
\begin{itemize}
\item The dimension of the  space of invariants $V^{\Gamma_{g;\vec{m}}}$  must  be even.
\item The $\Q$-algebraic group $G$ is 
  semisimple, and  the associated real group lies in the small list of the
groups arising from  Hermitian symmetric domains
of classical type \cite[p 518]{helgason}.
\item The last result applies  more generally to $V=H/[H,H]\otimes \Q$
  for any finite index  subgroup $H\subset K$. (A finite index subgroup $\Gamma'\subset \Gamma_{g,\vec{m}}$ will act on $V$,
  and $G$ can be defined as above using the $\Gamma'$-action.)
\end{itemize}
In the positive direction, we show that many  semisimple groups of classical
Hermitian type $G$ actually arise in this way from groups in $\mathcal{P}$.

Here is a more detailed summary of the contents of the paper. In the first
section, we construct a homomorphism $\rho:\pi_1(Y,y)\to O^+(X_y)$,
that we call nonabelian monodromy,
where $f:X\to Y$ is an oriented $C^\infty$ fibre bundle and  $O^+(X_y)=
\Out^+(\pi_1(X_y))$ is the group of orientation preserving outer
automorphisms of the fundamental group of  a fibre. When $Z=X_y$ is a curve, $O^+(X_y)$ is just
the mapping class group.
This has a well known representation given by its action on the
first homology of $Z$. More generally, a number of authors have
studied the action of subgroups of $O^+(Z)$ on the first homology of finite
(unramified) coverings of $Z$ \cite{gllm, koberda, loo}. All of these
extend to the more general situation, and we refer to these as
generalized Prym representations. In the second section, we study the
nonabelian monodromy of a family of smooth projective varieties. Our
main result here is that the Zariski closure of the image of the composite of a generalized Prym with
monodromy  is semisimple of classical Hermitian type. In a nutshell,
this is deduced from the fact that this is the monodromy
representation of a polarized variation  of Hodge structure of a
specific type. (And this is the main reason we work with
projective manifolds rather than compact K\"ahler manifolds.)
In the third section,  we deduce  the results stated in the first paragraph
by extending the monodromy theorem to families with singular fibres. In the
penultimate section, we go in a different direction. By combing the above
techniques with some work of Grunewald, Larsen,
Lubotzky, and  Malestein  \cite{gllm}, we compute Mumford-Tate groups
of some unamified covers of generic curves. The conclusion is that the
Hodge structure  of an  unramified cover looks very different from
the Hodge structure of the underlying curve.
The final section contains
examples, involving pencils of abelian varieties and Kodaira surfaces, 
with interesting monodromy groups.

The main ideas for this  paper were worked out during a visit to the
IH\'ES in the spring of 2015. My thanks to them
for a pleasant and productive stay. I would also like to thank one of the referees for bringing 
the very useful reference \cite{catanese} to my attention.

\section{Nonabelian monodromy}

Suppose that $F$ is a connected manifold.  Let $\delta$ be a path connecting
$x_0\in F$ to $x_1\in F$. A self diffeomorphism
$\phi:F\to F$, with $\phi(x_0)=x_1$, induces an automorphism $\pi_1(F,x_0)\to \pi_1(F,x_0)$
defined by $g\mapsto \delta^{-1}\phi_*(g)\delta$, where multiplication is taken in the fundamental groupoid. 
The corresponding outer automorphism is independent of
$\delta$. Now  suppose that  $f:X\to Y$ is a locally trivial $C^\infty$  fibre bundle with 
fibre $F$ and connected base $Y$. Then, after choosing a Riemannian metric on
$X$, we have a holonomy representation of
the fundamental group 
$$\tilde\rho:\pi_1(Y,y)\to \Isom(F)\subset \Diffeo(F)$$
to the group of isometries and therefore diffeomorphisms of $F$.  
Thus $\rho$ induces a homomorphism
$$\rho:\pi_1(Y,y)\to \Out(\pi_1(F,x_0))= \Aut(\pi_1(F,x_0))/\InnerAut(\pi_1(F,x_0))$$
We will refer to this as {\em nonabelian monodromy}.  

This can be described more topologically.
Given $\gamma\in \pi_1(Y)$, represent it by a $C^\infty$ map  $S^1\to Y$.
Then  we have an exact sequence
$$1\to \pi_1(F, x_0)\to \pi_1(X\times_Y S^1,x_0)\to \pi_1(S^1,0)=\Z\to 1$$
which necessarily splits (noncanonically). Let $ \tilde \gamma\in \pi_1(X\times_Y S^1)$ denote
a lift of $1\in \Z$.

\begin{lemma}
The outer automorphism of  $\pi_1(F)$ determined by $g\mapsto \tilde \gamma g \tilde\gamma^{-1}$
  coincides with   $\rho(\gamma)$.
\end{lemma}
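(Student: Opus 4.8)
The plan is to reduce to the case where the base is the circle, recognize the resulting bundle as a mapping torus, and then read off the conjugation from the standard presentation of the fundamental group of a mapping torus. So fix a $C^\infty$ representative $\gamma:S^1\to Y$ with $\gamma(0)=y$ and set $X'=X\times_Y S^1$, a $C^\infty$ fibre bundle over $S^1$ with fibre $F=X_y$. The Ehresmann connection on $X\to Y$ coming from the metric (horizontal subspace $=$ orthogonal complement of the tangents to the fibres) pulls back to a connection on $X'\to S^1$, with the property that the projection $\mathrm{pr}:X'\to X$ sends horizontal curves to horizontal curves. Parallel transport of $F$ around the positive generator $\sigma\in\pi_1(S^1,0)=\Z$ is therefore, as a self-diffeomorphism of $F$, literally equal to $\phi:=\tilde\rho(\gamma)$ (parallel transport around $\gamma$), since $\mathrm{pr}$ restricts to the identity on $X'_0=X_y$. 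Hence it suffices to show that, for $p:X'\to S^1$ with this connection and a lift $\tilde\gamma\in\pi_1(X',x_0)$ of $\sigma$, conjugation by $\tilde\gamma$ induces on $\pi_1(F,x_0)$ the outer class of $\phi_*$.

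Next I would identify $X'$ with a mapping torus. Cutting $S^1$ at $0$ and trivializing $X'|_{[0,1]}\cong F\times[0,1]$ by parallel transport from the fibre over $0$, the two ends of $F\times[0,1]$ are glued back to $X'_0=F$ at $u=0$ by the identity and at $u=1$ by $\phi$ (the holonomy around $\sigma$). This produces a $C^\infty$ bundle isomorphism $X'\cong M_\phi:=(F\times[0,1])/\bigl((x,1)\sim(\phi(x),0)\bigr)$ over $S^1=[0,1]/(0\sim1)$, carrying the holonomy around $\sigma$ to $\phi$. (One may skip the pullback and instead use parallel transport along $\gamma$ directly to build a map $F\times[0,1]\to X$ over $\gamma$ whose boundary identification is $\tilde\rho(\gamma)$, exhibiting $X\times_Y S^1$ as this mapping torus; either route is routine.)

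Finally I would compute in $\pi_1(M_\phi,x_0)$, which by the long exact homotopy sequence of $p$ fits into the exact sequence $1\to\pi_1(F,x_0)\to\pi_1(M_\phi,x_0)\to\Z\to1$ quoted above. Pick a path $\delta$ in $F$ between $x_0$ and $\phi(x_0)$ and let $\tilde\gamma$ be the concatenation of the arc $u\mapsto(x_0,u)$ with $\delta$; this is a loop at $x_0$ lifting $\sigma$. For $g\in\pi_1(F,x_0)$ represented by a loop $\ell$ in $F\times\{0\}$, the free homotopy $(s,u)\mapsto(\ell(s),u)$ in $M_\phi$ carries $\ell$ to the loop $\phi\circ\ell$, with the basepoint tracing out the arc $u\mapsto(x_0,u)$; converting this free homotopy to a based one introduces conjugation by that arc together with the correction $\delta$, and yields $\tilde\gamma g\tilde\gamma^{-1}=\delta^{-1}\phi_*(g)\delta$ in $\pi_1(F,x_0)$. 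This is exactly an automorphism of the form attached to the diffeomorphism $\phi=\tilde\rho(\gamma)$ and a path $\delta$ between $x_0$ and $\phi(x_0)$ in the opening paragraph of this section, whose outer automorphism is $\rho(\gamma)$ by definition; hence the outer class of $g\mapsto\tilde\gamma g\tilde\gamma^{-1}$ is $\rho(\gamma)$, as claimed. The step I expect to need the most care is this last bookkeeping of basepoints and of the orientation of $\sigma$: one must check that $\delta$ enters precisely as in the definition of $\rho$ and that the generator of $\pi_1(S^1)$ has been chosen consistently, so that one gets $\rho(\gamma)$ and not its inverse. It is cleanest to run the whole argument in the fundamental groupoid and defer the choice of $\delta$ to the very end.
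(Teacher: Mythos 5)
Your argument is correct and is essentially the paper's proof: both reduce to the pullback over $S^1$, use the metric's parallel transport as holonomy, and verify $\tilde\gamma g\tilde\gamma^{-1}=\delta^{-1}\phi_*(g)\delta$ for an explicit lift of the generator built from the horizontal lift of $\gamma$ through $x_0$ followed by a path $\delta$ to the basepoint. Your mapping-torus trivialization is just a repackaging of the paper's horizontal lifts $\epsilon_x$, with the arc $u\mapsto(x_0,u)$ playing the role of $\epsilon_{x_0}$, and the basepoint/orientation bookkeeping you flag is exactly the ``one easily checks'' step in the paper.
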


\begin{proof}
We may replace $Y$ by $S^1$ and $X$ by $X\times_Y S^1$.  Let $\gamma\in \pi_1(Y)$ denote a generator.
 Let  us say that a $C^\infty$ path in $X$  is horizontal if its tangent vectors lie in $\ker df_x^\perp$. 
 Through any $x\in F$,  there is a  unique horizontal lift $\epsilon_x$ of
 $\gamma$ with initial point $\epsilon_x(0)=x$.    This is generally not closed. The holonomy  $\phi:F\to F$ sends $x$ to the end point  $\epsilon_x(1)$.
 Let $\delta$ be a path in $F$ connecting $x_0$ to $x_1=\phi(x_0)$. 
 The element $\epsilon_{x_0}^{-1}\delta\in\pi_1(X,x_0)$ maps to   $\gamma^{-1}$. So it must be conjugate to $\tilde \gamma^{-1}$.
 One easily checks that
 $$\rho(g)=\delta^{-1}\phi_*(g)\delta =  (\epsilon_{x_0}^{-1}\delta)^{-1}g(\epsilon_{x_0}^{-1}\delta)$$
 \end{proof}

\begin{cor}\label{cor:rho}
The outer action of $\pi_1(Y)$ on $\im \pi_1(F)$, by conjugation
via a set-theoretic section of $f_*$ in the sequence below
$$1\to \im \pi_1(F)\to \pi_1(X)\stackrel{f_*}{\longrightarrow} \pi_1(Y)\to 1, $$
is compatible with  $\rho$.  
\end{cor}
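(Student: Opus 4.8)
The plan is to reduce the statement to the preceding Lemma by pulling the bundle back over circles, after recalling the purely group-theoretic formalism of outer actions. For any short exact sequence $1\to N\to E\xrightarrow{p} Q\to 1$ and any set-theoretic section $s$ of $p$, the assignment $q\mapsto (n\mapsto s(q)\,n\,s(q)^{-1})$ descends to a homomorphism $Q\to \Out(N)$: it lands in $\Aut(N)$ since $N$ is normal, it is independent of $s$ because two lifts of $q$ differ by an element of $N$ and conjugation by that element is inner in $N$, and it is a homomorphism because $s(q)s(q')$ is a lift of $qq'$. Applying this to the sequence
$$1\to \im\pi_1(F)\to \pi_1(X)\xrightarrow{f_*}\pi_1(Y)\to 1$$
yields the outer action $\bar\rho:\pi_1(Y)\to \Out(\im\pi_1(F))$ of the statement. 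What must be shown is that $\bar\rho$ equals the composite of $\rho$ with the natural map $\Out(\pi_1(F))\to \Out(\im\pi_1(F))$ induced by the surjection $\pi_1(F)\onto \im\pi_1(F)$; implicit in this is that $\rho(\gamma)$ preserves $\ker(\pi_1(F)\to\pi_1(X))$, which will come out of the argument.

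Fix $\gamma\in \pi_1(Y,y)$ and a smooth representative $c:S^1\to Y$ with $c(0)=y$. Form $\tilde X := X\times_Y S^1$ with its projections $\tilde f:\tilde X\to S^1$ and $\pi:\tilde X\to X$; the latter restricts to the canonical identification of the fibre of $\tilde f$ over $0$ with the fibre $F$ of $f$ over $y$, carrying $x_0$ to $x_0$. Because $\pi_2(S^1)=0$, the homotopy sequence $1\to\pi_1(F,x_0)\to\pi_1(\tilde X,x_0)\xrightarrow{\tilde f_*}\Z\to 1$ is exact on the left, so the element $\tilde\gamma$ of the Lemma makes sense and $\rho(\gamma)$ is the class of $g\mapsto \tilde\gamma g\tilde\gamma^{-1}$ on $\pi_1(F,x_0)\subset\pi_1(\tilde X,x_0)$. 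Set $\sigma(\gamma):=\pi_*(\tilde\gamma)\in\pi_1(X,x_0)$. From the pullback square $f\pi=c\tilde f$ we get $f_*(\sigma(\gamma))=c_*(1)=\gamma$, so $\sigma(\gamma)$ is an admissible value of a set-theoretic section of $f_*$, and $\bar\rho(\gamma)$ is the class of conjugation by $\sigma(\gamma)$ on $\im\pi_1(F)$. Now $\pi_*$ restricted to the image of $\pi_1(F,x_0)$ is exactly the defining surjection $\pi_1(F)\onto\im\pi_1(F)$, and $\tilde\gamma g\tilde\gamma^{-1}$ again lies in $\pi_1(F)$ (as $\ker\tilde f_*$); hence applying the homomorphism $\pi_*$ to $g\mapsto\tilde\gamma g\tilde\gamma^{-1}$ shows that conjugation by $\sigma(\gamma)$ on $\im\pi_1(F)$ is the pushforward of $\rho(\gamma)$. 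So $\bar\rho(\gamma)$ is the image of $\rho(\gamma)$; in particular $\rho(\gamma)$ preserves the relevant kernel, and since $\bar\rho$ and $\gamma\mapsto(\text{image of }\rho(\gamma))$ are both defined on all of $\pi_1(Y)$ this yields the corollary.

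The argument is essentially bookkeeping, and the one place to be careful is exactly that bookkeeping: one should verify that $\pi:\tilde X\to X$ genuinely carries the fibre over $0$ to the fibre over $y$ by the identity (so that $\pi_1(F)\to\pi_1(\tilde X)\to\pi_1(X)$ is the fibre inclusion, with image $\im\pi_1(F)$ and factoring the surjection onto it), and that all basepoints can be taken over $x_0$, so that the conjugation formula of the Lemma is a literal equality in $\pi_1(\tilde X,x_0)$ to which $\pi_*$ may be applied. No input beyond the Lemma and the standard extension formalism is required.
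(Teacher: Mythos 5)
Your argument is correct and is essentially the deduction the paper intends: the corollary is immediate from the preceding Lemma by pushing the conjugation formula forward along $\pi_*:\pi_1(X\times_Y S^1,x_0)\to\pi_1(X,x_0)$, noting that $\pi_*(\tilde\gamma)$ is a lift of $\gamma$ under $f_*$. Your added bookkeeping (injectivity of $\pi_1(F)\to\pi_1(X\times_Y S^1)$ via $\pi_2(S^1)=0$, and the observation that $\rho(\gamma)$ preserves $\ker(\pi_1(F)\to\pi_1(X))$ so the pushforward to $\Out(\im\pi_1(F))$ makes sense) is exactly the right care to take.
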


We set $O(F) = \Out(\pi_1(F))$.
If $F$ is oriented, then there is an induced orientation on $H^1(F,\R)= \Hom(\pi_1(F),\R)$.
Let $O^+(F)\subset O(F)$ denote the subgroup preserving
the orientation on $\Hom(\pi_1(F),\R)$. If $f$ is a  fibre bundle of oriented manifolds, then
the image of holonomy lies in the group of orientation preserving diffeomorphisms $\Diffeo^+(F)$.
It follows that $\rho(\pi_1(Y))\subseteq
O^+(F)$.
Let $O^+(F,\omega)=O(F,\omega)\cap O^+(F)$. When $F$ is a compact
oriented $2$-manifold, then $O^+(F)$ is just the mapping class group
\cite{fm}. Note that $O^+(F) = O(F,\omega)$, where $\omega\in
H^2(F)$ is the fundamental class.

Many of the familiar representations of the mapping class group
generalize to $O(F)$. The group $\Aut(\pi_1(F))$ has an obvious
 representation $\tau_\Z$ on $H_1(F,\Z)=
 \pi_1(F)^{ab}:=\pi_1(F)/[\pi_1(F),\pi_1(F)]$. Since inner
 automorphisms act trivially on $\pi_1(F)^{ab}$, $\tau_\Z$ factors through $O(F)$.
Let $\tau$ denote the corresponding rational representation $\tau_\Z\otimes\Q$.
In the case of the mapping class group, the kernel of $\tau$, called the Torelli group,
is rather large and somewhat mysterious.  Thus we  want to consider some
additional representations in order to detect elements of the Torelli group. 
It is convenient to adopt the viewpoint of \cite{gllm} that given a group $\Gamma$,
a representation $\sigma:\Gamma_1\to GL(V)$ of a finite index subgroup should be treated on the same
footing as a representation of $\Gamma$. We will refer to $\sigma$ as a {\em partial representation}  of $\Gamma$,
and  call $\Gamma_1$ the domain and denote it by $\Dom(\sigma)$. 
Let us say that two partial representations are {\em commensurable } if they agree after restriction
 to a finite index subgroup of the intersection of their domains.  
We can always induce a partial representation to an honest
representation, but it is better for our purposes not to do so.
We will mainly be concerned  with properties of 
 partial representations which depend only on the commensurability class,
 so we will occasionally  shrink the domains when it is convenient.
 Given $H\subset \pi_1(F)$  a subgroup of finite
 index, the stabilizer ${\Stab}(H)=\{\sigma\in \Aut(\pi_1(F))\mid \sigma(H)=
 H\}$, which  has finite index in $\Aut(\pi_1(F))$, acts on $H^{ab}\otimes \Q=H^{ab}_\Q$. Thus this is a partial representation of
 the automorphism group which we denote by $\tau^H$. If $H$ is characteristic, then $\Stab(H)=\Aut(\pi_1(F))$, so  $\tau^H$ is an honest
 representation.
 When $H$ is normal, then $G=\pi_1(X)/H$ acts on $H^{ab}$.
We can break the vector space $H_\Q^{ab}=H^{ab}\otimes \Q$ up into a sum
$\bigoplus_\chi (H_\Q^{ab})^\chi$ of  isotypic components
parameterized by the irreducible $\Q[G]$-modules $\chi$. In more explicit terms,
$(H_\Q^{ab})^\chi$ is the sum of all $\Q[G]$-submodules isomorphic to $\chi$.
This is a   representation of the subgroup
$\Stab(r)= \{\alpha\in \Aut(\pi_1(F))\mid \alpha\circ r=r\}\subseteq \Stab(H)$ where
$r:\pi_1(X)\to G$ denotes the projection. 
The family of partial 
representations obtained this way will be referred to as generalized
Prym representations, and denoted by $\tau^{H,\chi}$. In the case of the mapping class group, the
study of these representations (for nontrivial $H$) seems to have been
initiated by Looijenga \cite{loo}, and continued by Koberda
\cite{koberda}, and Grunewald, Larsen, Lubotzky, and  Malestein
\cite{gllm}. Koberbda \cite{koberda} has shown that $\bigoplus \tau^H$, as $H$ runs over characteristic subgroups, is a faithful 
representation of the based mapping class group. So in particular, the
nontriviality of elements of the based Torelli group can be detected
using these representations.
 
\begin{lemma}
 $\tau^H$ and $\tau^{H,\chi}$ descend to partial representations  of $\Out(F)$ after possibly shrinking their domains.
\end{lemma}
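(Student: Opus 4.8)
The plan is to show that, after restriction to a suitable finite-index subgroup of its domain, each of $\tau^H$ and $\tau^{H,\chi}$ annihilates the inner automorphisms it contains, and therefore factors through the quotient map $\Aut(\pi_1(F))\to\Out(\pi_1(F))$; the point is to carry this out while keeping finite index. Consider $\tau^H$ first. Its domain is $\Stab(H)$, and $\Stab(H)\cap\InnerAut(\pi_1(F))$ — which is exactly the kernel of $\Stab(H)\to\Out(\pi_1(F))$ — consists of the conjugations $c_g$ with $g$ in the normalizer $N:=N_{\pi_1(F)}(H)$, a finite-index subgroup of $\pi_1(F)$. Since conjugation by an element of $H$ is inner in $H$ and hence trivial on $H^{ab}_\Q$, the action of $\Stab(H)\cap\InnerAut(\pi_1(F))$ on $H^{ab}_\Q$ factors through the finite group $N/H$. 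So, writing $A:=\tau^H(\Stab(H))\subseteq GL(H^{ab}_\Q)$ and $Q:=\tau^H(\Stab(H)\cap\InnerAut(\pi_1(F)))$, the subgroup $Q$ is finite and normal in $A$.

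Now $A$ is a subgroup of $GL(H^{ab}_\Q)$, a linear group over $\Q$, hence residually finite, so there is a finite-index subgroup $A_0\subseteq A$ with $A_0\cap Q=\{1\}$. Put $\Gamma_0:=(\tau^H)^{-1}(A_0)$, a finite-index subgroup of $\Stab(H)$. Any element of $\Gamma_0\cap\InnerAut(\pi_1(F))=\Gamma_0\cap\bigl(\Stab(H)\cap\InnerAut(\pi_1(F))\bigr)$ is sent by $\tau^H$ into $A_0\cap Q=\{1\}$, so $\tau^H|_{\Gamma_0}$ is trivial on the kernel of $\Gamma_0\to\Out(\pi_1(F))$ and descends to a representation of the finite-index subgroup $\Gamma_0/(\Gamma_0\cap\InnerAut(\pi_1(F)))$ of $\Out(\pi_1(F))$; that is the desired partial representation. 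For $\tau^{H,\chi}$ one argues identically once two supplementary remarks are made. First, an $\alpha$ in the domain $\Stab(r)$ acts $G$-equivariantly on $H^{ab}_\Q$: if $\tilde g\in\pi_1(F)$ maps to $g\in G$ then so does $\alpha(\tilde g)$ by definition of $\Stab(r)$, hence the two differ by an element of $H$, which is invisible on $H^{ab}_\Q$; therefore $\Stab(r)$ preserves the isotypic piece $(H^{ab}_\Q)^\chi$ and $\tau^{H,\chi}$ really is a representation of $\Stab(r)$. Second, the inner automorphisms lying in $\Stab(r)$ are conjugations by elements whose image in $G$ is central, and each such conjugation acts on $(H^{ab}_\Q)^\chi$ through the action of the finite group $Z(G)$; so again the image of the inner part is a finite normal subgroup of $\tau^{H,\chi}(\Stab(r))\subseteq GL((H^{ab}_\Q)^\chi)$, and the residual-finiteness argument above produces a finite-index subgroup of $\Stab(r)$ on which $\tau^{H,\chi}$ kills its inner automorphisms.

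The only input that is not purely formal is the observation that the inner automorphisms in the domain act through a finite group — through $N/H$ for $\tau^H$, and through $Z(G)$ (after the $G$-equivariance remark) for $\tau^{H,\chi}$. This is what makes the obstruction $Q$ to descent finite, hence separable inside the $\Q$-linear image of the representation, after which the descent is immediate. I therefore expect the step needing the most care to be the $G$-equivariance of the $\Stab(r)$-action together with the attendant identification of the inner part of $\Stab(r)$ as conjugations by a subgroup mapping into $Z(G)$; everything else is formal manipulation with finite-index subgroups and residual finiteness of linear groups.
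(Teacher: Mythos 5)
Your overall strategy is exactly the paper's: observe that the inner automorphisms lying in the domain act on $H^{ab}_\Q$ through a finite group (via $N/H$, resp.\ through the image of $Z(G)$ after the $G$-equivariance remark, both of which you verify correctly), and then separate this finite obstruction from the identity by passing to a finite-index subgroup, using residual finiteness of the image in $GL(H^{ab}_\Q)$.

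The one genuine flaw is the justification of that separation step. You write that $A=\tau^H(\Stab(H))$ is ``a subgroup of $GL(H^{ab}_\Q)$, a linear group over $\Q$, hence residually finite.'' That general claim is false: residual finiteness of linear groups (Mal'cev) requires \emph{finite generation}. For instance $SL_2(\Q)$ is linear over $\Q$, contains the finite normal subgroup $\{\pm I\}$, and has no proper finite-index subgroups at all, so even the weaker separation property you need (a finite-index $A_0$ with $A_0\cap Q=\{1\}$) can fail for a non-finitely-generated linear group. The paper's proof is careful on precisely this point: it asserts that the image $\tau^H(S)$ is \emph{finitely generated} linear, and only then concludes residual finiteness. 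So to complete your argument you must either supply (or assume, as the paper does) finite generation of the image $A$, or restrict at the outset to a finitely generated subgroup of the domain relevant to the application (e.g.\ the image of a finitely generated monodromy group), after which your residual-finiteness separation and the descent to a finite-index subgroup of $\Out(\pi_1(F))$ go through exactly as you wrote them. Everything else in your proposal, including the $G$-equivariance of the $\Stab(r)$-action and the identification of the inner automorphisms in $\Stab(r)$ as conjugations by elements mapping into $Z(G)$, is correct.
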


\begin{proof}
We focus on $\tau^H$, the argument for the second case is the same.
Let $V=H^{ab}_ \Q$  and let $S=\Dom(\tau^H)$.
 We have an exact sequence
 $$\pi_1(F)\to \Aut(\pi_1(F))\to O(F)\to 1$$
 Since inner automorphism by elements of $H$ act trivially on $V$, the image
 $\tau^H(\pi_1(F)\cap S)$  is finite. Since the image $\tau^H(S)$ is 
finitely generated  linear, and therefore residually finite,
we can choose a finite index subgroup of $\Gamma_1\subset S$ 
such that $\tau^H(\Gamma_1   \cap \pi_1(F))=\{1\}$.
\end{proof}

\section{Smooth projective families}
 Now suppose that $f:X\to Y$ is a smooth projective morphism of smooth
 varieties. We assume furthermore that  the fibres of $f$ are
 connected. By assumption, we have a relatively ample line bundle $\mathcal{L}$
 on $X$  with first Chern class $\omega$.
By Ehressman's theorem, $f$ is a $C^\infty$ fibre
 bundle. Thus we get a homomorphism $\rho:\pi_1(Y,y)\to O(X_y,\omega)$,
 where $X_y$ is the fibre over $y$.  It is worth observing that
 $\im\rho\subseteq O^+(X_y,\omega)$.
{\em From now on, 
the representation $\tau^H$ will
 denote the restriction of the previous $\tau^H$ to $O^+(X_y,\omega)$}. Let $n$ denote the dimension of $X_y$. 
 If $\pi:\tilde X_y\to X_y$ is the finite unramified covering corresponding to a finite index subgroup $H\subset \pi_1(X_y)$, then $X_y$ is projective with an ample class $\tilde \omega =\pi^*\omega$.
 By the hard Lefschetz theorem, we have a symplectic
 form
$$(\, , \,):H^1(\tilde X_y,\Q)\times H^1(\tilde X_y,\Q)\to H^2(\tilde X_y,\Q)\stackrel{\cup
  \tilde \omega^{n-2}}{\longrightarrow} H^{2n}(\tilde X_y,\Q)\cong \Q$$
This induces a dual pairing denoted by the same symbol on
$H_1(\tilde X_y,\Q)\cong H_\Q^{ab}$. The action of $\Stab(H)$ preserves this, so $\tau^H$
is a representation into the corresponding symplectic group.

In order to analyze the Zariski closures of these representations, 
we need to recall some basic facts about Mumford-Tate groups;
we refer to  \cite[\S1]{milne1} or \cite{moonen} for a more detailed treatment.
Recall that a rational Hodge structure $H$
consists of  a finite dimensional $\Q$-vector space and a
decomposition $H_\C=H\otimes \C=\oplus H^{pq}$ with $H^{qp}=\bar H^{pq}$.
The bigrading determines and is determined by the homomorphism of $h:\C^*\to GL(H_\R)$, given
by $h(\lambda)v=\sum \lambda^{q}\bar\lambda^pv^{pq}$.  
The Mumford-Tate group $MT(H)\subseteq GL(H)$ is the smallest $\Q$-algebraic subgroup
whose real points contain the image of $\C^*$. For our purposes, it is more convenient to work with  a slightly smaller group
called the Hodge group or the special Mumford-Tate group $SMT(H)$ given as the identity component of
  $MT(H)\cap SL(H)$.
This is the smallest $\Q$-algebraic subgroup $SMT(H)\subset GL(H)$, whose real points contain  the
image of the unit circle $h(U(1))$.

Let us say that a real algebraic group $G$ is of Hermitian type if it
is connected, reductive and the quotient of it  by a maximal compact
subgroup $K$ is a Hermitian
symmetric space. Say that $G$ is of symplectic Hermitian type if in addition
$G/K$ has a totally geodesic holomorphic embedding  into a Siegel
upper half plane.  By Cartan's classification, a noncompact simple group of
Hermitian type is isogenous to $SU(p,q), SO(2,p)^o, SO^*(2p),
Sp(2g,\R)$ or certain real forms of $E_6$ or $E_7$ \cite[p 518]{helgason}. By
Satake \cite{satake}, only the first four are symplectic.
A $\Q$-algebraic group $G$ will  be called (symplectic) Hermitian if
$G(\R)$ has these properties.

\begin{thm}[Mumford]\label{thm:mumford}
 Suppose that  $H$ is polarizable  of
type $\{(-1,0), (0,-1)\}$; in other words, suppose that $H$ is the  first
homology of an abelian variety.  Then  $M=SMT(H)$ is of symplectic Hermitian type.
\end{thm}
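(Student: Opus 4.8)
The approach is to realize $M=SMT(H)$ as the special Mumford--Tate group of a weight $-1$ polarizable Hodge structure and to isolate the three features that force symplectic Hermitian type: reductivity, a Cartan involution built from the Weil operator, and the weight behaviour of the induced Hodge structure on $\operatorname{End}(H)$. Write $2g=\dim_{\Q}H$ and fix a polarization, i.e.\ a morphism of Hodge structures $Q\colon H\otimes H\to\Q(1)$. Since the weight is odd, $Q$ is an alternating form, and it is invariant under $h(U(1))$ because $h(U(1))$ acts trivially on $\Q(1)$. By the minimality built into the definition of $SMT(H)$, the group $M$ preserves $Q$, hence $M\subseteq Sp(H,Q)$.

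First I would check that $M$ is reductive. This is the standard consequence of polarizability: every rational Hodge structure occurring as a subquotient of some $H^{\otimes a}\otimes(H^{\vee})^{\otimes b}$ is again polarizable and therefore semisimple, so the Tannakian category generated by $H$ is semisimple and its fundamental group $M$ is reductive (see \cite[\S1]{milne1} or \cite{moonen}); moreover $M$ is connected by construction. Next I would produce a Cartan involution of $M_{\R}$. Let $C=h(i)\in M(\R)$ be the Weil operator; since the weight is $-1$ one has $C^{2}=-\operatorname{id}$. The Hodge--Riemann bilinear relations say precisely that $S(x,y):=Q(x,Cy)$ is a symmetric positive definite form on $H_{\R}$ and that $C$ is an isometry of $Q$; consequently $\operatorname{Int}(C)$ is a Cartan involution of $Sp(H,Q)_{\R}$, with fixed subgroup the unitary group $U(g)$ attached to $(S,C)$. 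Because $M$ is reductive and stable under $\operatorname{Int}(C)$ (as $C\in M(\R)$), the restriction $\theta=\operatorname{Int}(C)|_{M_{\R}}$ is a Cartan involution of $M_{\R}$; put $K=M(\R)^{\theta}$, a maximal compact subgroup.

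The remaining ingredient is the action of $h$ on $\mathfrak{m}=\operatorname{Lie}(M)$. Since $H$ has Hodge type $\{(-1,0),(0,-1)\}$, the natural weight $0$ Hodge structure on $\operatorname{End}(H)\cong H\otimes H^{\vee}$ has Hodge types contained in $\{(-1,1),(0,0),(1,-1)\}$, and $\mathfrak{m}$, being $h$-stable and $\operatorname{Ad}(M)$-stable, inherits a decomposition $\mathfrak{m}_{\C}=\mathfrak{m}^{-1,1}\oplus\mathfrak{m}^{0,0}\oplus\mathfrak{m}^{1,-1}$. One checks that $\mathfrak{m}^{0,0}=\operatorname{Lie}(K)_{\C}$ is the $+1$-eigenspace of $\theta$, that $\mathfrak{p}_{\C}:=\mathfrak{m}^{-1,1}\oplus\mathfrak{m}^{1,-1}$ is the $-1$-eigenspace, that $\operatorname{Ad}h(z)$ acts on the two summands through $z/\bar z$ and $\bar z/z$, and that $\mathfrak{p}_{\C}$ contains no nonzero $h$-fixed vector. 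These are exactly the infinitesimal conditions guaranteeing that $M(\R)/K$, equipped with the invariant complex structure equal to $+i$ on $\mathfrak{m}^{-1,1}$ and $-i$ on $\mathfrak{m}^{1,-1}$, is a Hermitian symmetric domain (cf.\ \cite[p.~518]{helgason}); so $M$ is of Hermitian type. Finally the inclusion $M\hookrightarrow Sp(H,Q)$ is compatible with the chosen Cartan involutions and carries $h$ to the homomorphism defining the Siegel Hodge structure on $(H,Q)$, so it induces a totally geodesic holomorphic embedding $M(\R)/K\hookrightarrow Sp(2g,\R)/U(g)=\mathfrak{H}_g$ into a Siegel upper half plane; hence $M$ is of symplectic Hermitian type.

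The step I expect to be the main obstacle is the passage in the last paragraph from the three infinitesimal facts (reductivity, $\operatorname{Int}(h(i))$ a Cartan involution, $\operatorname{Ad}h$ of weights in $\{-1,0,1\}$ on $\mathfrak{m}$) to the honest statements that $M(\R)/K$ is a Hermitian symmetric domain and that the map into $\mathfrak{H}_g$ is totally geodesic and holomorphic; this is the structure theory behind Shimura data, and the care it requires lies in checking that these conditions genuinely descend to the subgroup $M\subseteq Sp(H,Q)$ rather than merely holding for $Sp(H,Q)$ itself. The reductivity argument and the Riemann-relations input are, by comparison, routine manipulations with the Hodge decomposition.
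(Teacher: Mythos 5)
Your proposal is correct and follows essentially the same route as the paper: connectedness by definition, reductivity from polarizability, Hermitian type from the conditions on $\mathrm{Ad}\circ h$ (adjoint Hodge types $\{(-1,1),(0,0),(1,-1)\}$ and $\operatorname{Int}h(i)$ a Cartan involution), and the symplectic property from the $h$-compatible inclusion $M\subset Sp(H,\psi)$ giving a totally geodesic holomorphic embedding into the Siegel space. The only difference is presentational: where the paper simply cites Milne's Theorem 1.21 and Satake's $(H_2)$ condition (and invokes an invariant positive form for reductivity), you unpack those citations into the standard direct verifications, which is exactly the content being cited.
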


\begin{proof}
  This is stated in Mumford \cite[pp 348-350]{mumford} without proof, so we
 give a brief explanation here.
The connectedness of $M$ is clear from the definition. The polarizability of $H$
shows that $M$ leaves a positive definite form invariant, and this
implies reductivity. The
Hermitianness of $M$ can be deduced from \cite[thm 1.21] {milne} (the
homomorphism $h$ satisfies conditions (a), (b), (c) of that
theorem). Furthermore, since $H$ has a polarization $\psi$, we have a
homomorphism $U(1)\to Sp(H,\psi)$, whence an inclusion $M\subset
Sp(H,\psi)$ satisfying the $(H_2)$ condition of
\cite{satake}. Therefore, we have an embedding of  the symmetric space
associated to $M$ into the symmetric space associated to $Sp(H,\psi)$.
\end{proof}

As noted above, the result  puts very strong restrictions on the
possible values for $M$. Note that $SMT(H)= SMT(H^*)$.
So we  switch to the dual when it is convenient.

Here is the first main result.

\begin{thm}\label{thm:main1}
Suppose that $X\to Y$ is a  smooth projective family with ample class
$\omega$ over a smooth
quasiprojective base. Let $\rho:\pi_1(Y,y)\to O(X_y,\omega)$ denote the
nonabelian monodromy. Then for any finite index  normal subgroup $H\subset \pi_1(X_y)$ and
character $\chi$ of the quotient,
  the identity component of the  Zariski closure of the image of
  $\tau^{H,\chi}\circ \rho$ is semisimple of symplectic Hermitian type.
\end{thm}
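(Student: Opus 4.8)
The plan is to realize $\tau^{H,\chi}\circ\rho$ as (a summand of) the monodromy representation of a polarized variation of Hodge structure of weight one, and then invoke Theorem~\ref{thm:mumford} together with the fact that the algebraic monodromy group of such a variation is semisimple. First I would pass to the finite unramified covering $\pi:\tilde X_y\to X_y$ corresponding to $H$. The key point is that this covering deforms in families: since $H$ is normal, the cover $\tilde X_y$ is canonically attached to $X_y$ together with the surjection $\pi_1(X_y)\onto G=\pi_1(X_y)/H$, and — after possibly replacing $\pi_1(Y,y)$ by the finite index subgroup $\Dom(\tau^{H,\chi}\circ\rho)=\Stab(r)$ as in the second Lemma — the monodromy $\rho$ fixes this surjection. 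Concretely, one forms the fibre product / covering space $\tilde X\to X$ along $f$ obtained by taking, over a small contractible $U\subset Y$, the cover $f^{-1}(U)$ determined by $H$; the condition that $\rho$ fixes $r$ guarantees these glue to a smooth projective family $\tilde f:\tilde X\to Y'$ (over the finite cover $Y'$ of $Y$ corresponding to $\Dom$) with fibre $\tilde X_y$, and carrying the relatively ample class $\tilde\omega=\pi^*\omega$.

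Next I would take the local system $R^1\tilde f_*\Q$ on $Y'$. By the theory of variations of Hodge structure (Griffiths), this underlies a polarizable $\Q$-VHS of weight one, with polarization furnished exactly by the hard Lefschetz pairing $(\,,\,)$ described before the statement — so the local system lands in the corresponding symplectic group, matching the symplectic structure already identified for $\tau^H$. Its monodromy representation is, by Corollary~\ref{cor:rho} and the construction of $\tau^H$, precisely $\tau^H\circ\rho$ (up to dualizing $H^1$ versus $H_1$, which is harmless by $SMT(H)=SMT(H^*)$). The group $G$ acts on this VHS by automorphisms, so it decomposes as a direct sum of sub-VHS over the isotypic pieces for the $\Q[G]$-modules $\chi$; the $\chi$-summand is a polarizable sub-VHS whose monodromy is $\tau^{H,\chi}\circ\rho$. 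Thus $\tau^{H,\chi}\circ\rho$ is the monodromy of a polarizable weight-one $\Q$-VHS.

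Now I invoke the two standard structural facts. First, by a theorem of Deligne (semisimplicity of the algebraic monodromy group of a polarizable VHS — see, e.g., the discussion in \cite{moonen} or the original in Deligne's ``Théorie de Hodge II''), the identity component $M$ of the Zariski closure of the image of $\tau^{H,\chi}\circ\rho$ is semisimple. Second, for the Hermitian symplectic type: by the genericity of the Mumford--Tate group, there is a point $y'$ in (a suitable subvariety of) $Y'$ at which the special Mumford--Tate group $SMT(\tilde X_{y'}^{\,\chi})$ contains $M$ as a normal subgroup; since $\tilde X_{y'}$ is an abelian variety (weight one, polarized), Theorem~\ref{thm:mumford} gives that $SMT$ of the whole is of symplectic Hermitian type, and since symplectic Hermitian type is inherited by normal semisimple subgroups (each simple factor is again one of $SU(p,q),SO(2,p)^o,SO^*(2p),Sp(2g,\R)$, and a product of symplectic Hermitian simple groups maps totally geodesically and holomorphically into the ambient Siegel space), $M$ is of symplectic Hermitian type as well.

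The main obstacle I expect is the first step: making precise that the normal finite index subgroup $H$, together with the quotient $G$ and the datum of an isotypic character $\chi$, deforms in the family to yield an honest smooth projective morphism $\tilde f:\tilde X\to Y'$. This requires the observation that nonabelian monodromy $\rho$ fixes the relevant surjection $r:\pi_1(X)\to G$ (hence one must shrink $\pi_1(Y)$ to $\Stab(r)$, exactly the domain of the partial representation, which is where the commensurability/partial-representation formalism pays off), and then a gluing argument — or equivalently, pulling back the universal $G$-cover — to build $\tilde X$; one also needs $\tilde f$ to remain projective, which follows from relative ampleness of $\tilde\omega=\pi^*\omega$. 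The rest is a matter of quoting the Hodge-theoretic machinery and Theorem~\ref{thm:mumford}.
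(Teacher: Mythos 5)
There is a genuine gap, and it sits exactly at the step you yourself flag as the main obstacle: the claim that ``the condition that $\rho$ fixes $r$ guarantees these glue to a smooth projective family'' over the finite \'etale cover $Y'$ corresponding to $\Dom(\tau^{H,\chi})$. The monodromy is only an \emph{outer} action, so knowing that each $\rho(\gamma)$ lies in the image of $\Stab(r)$ tells you that the fibrewise covers determined by $H$ are pairwise isomorphic over overlaps, but only up to deck transformations (i.e.\ up to $G$); turning this into an honest family of covers requires a coherent choice of these isomorphisms, and the obstruction is controlled by the extension class of $1\to \pi_1(X_y)\to \pi_1(X\times_Y Y')\to \pi_1(Y')\to 1$. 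Equivalently, you need the quotient map $r:\pi_1(X_y)\to G$ to extend to (a finite-index subgroup of) $\pi_1(X\times_Y Y')$, and this is not a formality: one cannot even argue it away by residual finiteness, since fundamental groups of smooth projective varieties need not be residually finite. So the sentence ``pulling back the universal $G$-cover'' presupposes exactly what has to be proved.

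The paper resolves this by a different device (its key lemma): after the \'etale cover $Y_1\to Y$ achieving $\rho(\pi_1)\subseteq\Dom(\tau^{H,\chi})$, it chooses a rational point of the generic fibre over a finite extension of $\C(Y_1)$ and takes $\tilde Y$ to be a desingularized normalization, so that the pulled-back family $\tilde X\to\tilde Y$ acquires a \emph{section}; then $\pi_1(\tilde X)\cong\pi_1(X_y)\rtimes\pi_1(\tilde Y)$ and $r$ extends to $\pi_1(\tilde X)$ by the elementary semidirect-product lemma, producing a global Galois \'etale cover $Z\to\tilde X$ whose fibres are the $\tilde X_y$ and whose $R^1$ gives the weight-one polarized VHS with the $\chi$-isotypic summand cut out by the central idempotent. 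Because $\tilde Y\to Y$ is only generically finite (not \'etale), one must also check that $p_*\pi_1(\tilde Y)$ has finite index in $\pi_1(Y)$ -- the paper does this with a Lefschetz hyperplane argument on a general complete-intersection curve -- so that passing to $\tilde Y$ does not change the identity component of the Zariski closure; your proposal, relying on an \'etale $Y'$, never confronts this but also never legitimately obtains the geometric family. The second half of your argument (Deligne/Andr\'e semisimplicity, normality of the algebraic monodromy group in the special Mumford--Tate group of a very general fibre, Theorem~\ref{thm:mumford}, and inheritance of symplectic Hermitian type by normal subgroups) matches the paper's proof and is fine once the VHS is in hand.
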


The proof will rely on the following   lemmas.

\begin{lemma}\label{lemma:gt}
Suppose that  we are given a homomorphism of groups $r:\Gamma\to G$  and an action of
another group $\Pi$ on $\Gamma$ preserving $r$, i.e. $\Stab(r)
=\Pi$. Then $r$ extends to a homomorphism $\tilde r:\Gamma\rtimes \Pi
\to G$.
\end{lemma}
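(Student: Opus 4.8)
The plan is to extend $r$ in the most economical way possible: let $\tilde r$ be $r$ on the subgroup $\Gamma$ and the trivial homomorphism on the subgroup $\Pi$. Since every element of $\Gamma\rtimes\Pi$ is uniquely a product $(\gamma,\pi)$ with $\gamma\in\Gamma$ and $\pi\in\Pi$, this unambiguously defines a set map $\tilde r(\gamma,\pi)=r(\gamma)$ which visibly restricts to $r$ on $\Gamma$. Everything then comes down to checking that $\tilde r$ is a homomorphism.

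For that I would simply write out the product in the semidirect product, $(\gamma_1,\pi_1)(\gamma_2,\pi_2)=(\gamma_1\cdot{}^{\pi_1}\!\gamma_2,\ \pi_1\pi_2)$, where ${}^{\pi}\gamma$ denotes the given $\Pi$-action on $\Gamma$, and compute $\tilde r\big((\gamma_1,\pi_1)(\gamma_2,\pi_2)\big)=r\big(\gamma_1\cdot{}^{\pi_1}\!\gamma_2\big)=r(\gamma_1)\,r\big({}^{\pi_1}\!\gamma_2\big)$. The hypothesis $\Stab(r)=\Pi$ says exactly that each element of $\Pi$ acts on $\Gamma$ by an automorphism fixing $r$, i.e. $r({}^{\pi_1}\!\gamma_2)=r(\gamma_2)$, so the last expression equals $r(\gamma_1)r(\gamma_2)=\tilde r(\gamma_1,\pi_1)\tilde r(\gamma_2,\pi_2)$, which is what we want.

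There is no real obstacle here beyond keeping track of conventions; the entire content of the lemma is the invariance hypothesis, which is precisely what promotes the (otherwise non-multiplicative) ``project to $\Gamma$ and apply $r$'' assignment to a genuine homomorphism. If one prefers a conceptual phrasing, the universal property of the semidirect product says a homomorphism $\Gamma\rtimes\Pi\to G$ is the same data as a pair of homomorphisms $f\colon\Gamma\to G$ and $g\colon\Pi\to G$ with $g(\pi)f(\gamma)g(\pi)^{-1}=f({}^{\pi}\gamma)$; taking $f=r$ and $g$ trivial, this compatibility degenerates to $r({}^{\pi}\gamma)=r(\gamma)$, i.e. to $\Stab(r)=\Pi$, and the extension exists for that reason. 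I would write up whichever of these two equivalent arguments is shorter.
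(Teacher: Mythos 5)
Your proof is correct and is exactly what the paper means by its one-line remark that the lemma is ``immediate from the standard formulas for the semidirect product'': define $\tilde r(\gamma,\pi)=r(\gamma)$ and check multiplicativity using $r({}^{\pi}\gamma)=r(\gamma)$. No difference in approach, just more detail spelled out.
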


The proof is
immediate from the standard formulas for the semidirect product.

\begin{lemma}\label{lemma:key}
With the same notation as in theorem \ref{thm:main1}, there exist a
  surjective generically finite  morphism of smooth varieties $p:\tilde Y\to Y$ such that
  $p_*(\pi_1(\tilde Y))\subset \pi_1(Y)$ has a finite index and is
  contained in the domain of $\tau^{H,\chi}$. Furthermore
  $\tau^{H,\chi}\circ \rho\circ p_*$ is the monodromy representation of a polarizable
  variation  of Hodge structures on $\tilde Y$ of type $\{(-1,0), (0,-1)\}$.
\end{lemma}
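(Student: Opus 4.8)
The plan is to realize $\tau^{H,\chi}\circ\rho$ as the monodromy of a variation of Hodge structure coming from a family of abelian varieties built out of the covering $\tilde X_y \to X_y$. First I would dispose of the passage to $\tilde Y$: the domain $\Dom(\tau^{H,\chi})$ is a finite index subgroup of $O^+(X_y,\omega)$, and $\rho^{-1}$ of it is finite index in $\pi_1(Y,y)$. Since $Y$ is a smooth quasiprojective variety, any finite index subgroup of $\pi_1(Y)$ is the image of the fundamental group of a finite étale cover $Y' \to Y$; after resolving and compactifying we may take $\tilde Y$ smooth quasiprojective with $p:\tilde Y\to Y$ generically finite and surjective and $p_*\pi_1(\tilde Y)$ of finite index inside $\rho^{-1}(\Dom(\tau^{H,\chi}))$. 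So from now on I work over $\tilde Y$, pull back the family $X$, and assume the monodromy image already lands in the domain.

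Next comes the geometric heart. Over $\tilde Y$ the fibre bundle $X\to Y$ has structure group factoring through $O^+(X_y,\omega)$ via $\rho$, and the subgroup $H\subset\pi_1(X_y)$ is preserved up to the $\chi$-isotypic bookkeeping because we have shrunk the domain. Concretely, using Corollary \ref{cor:rho} and Lemma \ref{lemma:gt}, the outer action lifts (after the finite cover) to a genuine action of $\pi_1(\tilde Y)$ on $\pi_1(X_y)/H'$ for a suitable characteristic-in-$H$ finite index refinement $H'$, hence the covering space $\tilde X_y \to X_y$ spreads out to a fibre bundle $\tilde{\mathcal X} \to \tilde Y$ with fibre $\tilde X_y$, sitting in a finite covering $\tilde{\mathcal X}\to X\times_Y\tilde Y$. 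Because $f$ is projective, $\tilde{\mathcal X}\to\tilde Y$ is again a smooth projective family (pull back $\mathcal L$ and take $\tilde\omega=\pi^*\omega$, which is relatively ample). The local system $R^1 \tilde f_* \Q$ then underlies a polarizable VHS of weight $1$ on $\tilde Y$, by Griffiths' theory: the Hodge filtration from $H^{1,0}(\tilde X_{y}) = H^0(\Omega^1)$ varies holomorphically, Griffiths transversality is automatic in weight one, and the polarization is the hard Lefschetz pairing $(\,,\,)$ cupping with $\tilde\omega^{n-1}$ introduced just before the theorem (note: for weight one we cup with $\tilde\omega^{n-1}$, not $\tilde\omega^{n-2}$; I'd check the exponent bookkeeping here). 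Dualizing gives weight $-1$, i.e.\ type $\{(-1,0),(0,-1)\}$, the first homology of the (in general only an abelian scheme if we pass to $\mathrm{Alb}$) family; precisely, replace $\tilde{\mathcal X}/\tilde Y$ by its relative Albanese $\mathrm{Alb}(\tilde{\mathcal X}/\tilde Y)\to\tilde Y$, an abelian scheme whose first homology local system is exactly $H_1(\tilde X_y,\Q)$ with its monodromy $= \tau^{H'}\circ\rho\circ p_*$.

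Finally I would cut out the $\chi$-isotypic piece. The deck group $G=\pi_1(X)/H'$ acts by automorphisms of the abelian scheme $\mathrm{Alb}(\tilde{\mathcal X}/\tilde Y)$ over $\tilde Y$ (it acts fibrewise on $\tilde X_y$ commuting with the bundle structure, by construction of the spreading-out), so it acts on the VHS by morphisms of Hodge structures. Hence each isotypic summand $(H^{ab}_\Q)^\chi$ is a sub-VHS, polarizable (restrict the form; it stays nondegenerate since the $G$-action is by isometries and distinct isotypic components are orthogonal), and of type $\{(-1,0),(0,-1)\}$. Its monodromy representation is by definition $\tau^{H,\chi}\circ\rho\circ p_*$. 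That finishes the lemma. The step I expect to be the main obstacle is the second one: upgrading the outer action/nonabelian monodromy into an honest fibre bundle of covering spaces $\tilde{\mathcal X}\to\tilde Y$ in the algebraic (not just $C^\infty$) category, so that $\tilde f$ is genuinely a smooth projective morphism and the VHS machinery applies — this is where Lemma \ref{lemma:gt}, the characteristic refinement of $H$, and a further shrinking of $\Dom$ all have to be orchestrated carefully, and where the requirement that we work with projective manifolds (so that Albanese and hard Lefschetz are available) is genuinely used.
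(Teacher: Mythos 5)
Your overall target is the right one (realize $\tau^{H,\chi}\circ\rho$ as the $\chi$-part of $R^1$ of a family of \'etale covers of the fibres, then quote weight-one Hodge theory and dualize), and your side remarks are sound: the polarization exponent in the paper should indeed be $\tilde\omega^{n-1}$, and the passage to an Albanese is harmless though unnecessary. But the step you yourself flag as ``the main obstacle'' --- spreading the cover $\tilde X_y\to X_y$ out over the base, i.e.\ producing a finite-index subgroup $T\subset\pi_1$ of the total space with $T\cap\pi_1(X_y)=H$ --- is exactly the content of the lemma, and your proposal does not supply the mechanism. The paper's device is geometric: after the \'etale cover $Y_1\to Y$ that puts the monodromy in $\Dom(\tau^{H,\chi})$, one chooses a rational point of the generic fibre of $X\times_Y Y_1\to Y_1$ over a finite extension $K$ of $\C(Y_1)$ and takes $\tilde Y$ to be a desingularization of the normalization of $Y_1$ in $K$; the pulled-back family then has a \emph{section}, so $\pi_1(\tilde X)\cong\Gamma\rtimes\pi_1(\tilde Y)$, and Lemma \ref{lemma:gt} extends $r:\Gamma\to G$ to $\pi_1(\tilde X)\to G$, whose kernel gives the Galois cover $Z\to\tilde X$ restricting fibrewise to $\tilde X_y$. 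This is why the statement asserts only a \emph{generically finite} (possibly ramified) $p:\tilde Y\to Y$ rather than a finite \'etale one, and it is also why the paper must then run a separate Lefschetz-plus-diagram-chase argument to check that $p_*\pi_1(\tilde Y)$ still has finite index in $\pi_1(Y)$ --- a point your write-up skips because you only ever pass to an \'etale cover.

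Your substitute --- replace $H$ by a characteristic refinement $H'$ and ``lift the outer action to a genuine action of $\pi_1(\tilde Y)$ on $\pi_1(X_y)/H'$ using Corollary \ref{cor:rho} and Lemma \ref{lemma:gt}'' --- does not go through as stated. Lemma \ref{lemma:gt} presupposes an honest action (a semidirect product); without a section of $\tilde X\to\tilde Y$ you only have an outer action, and lifting it is obstructed. Even with $H'$ characteristic in $\pi_1(X_y)$, so that $H'$ is normal in $\pi_1(\tilde X)$, what you need is a finite-index subgroup of $\pi_1(\tilde X)/H'$ meeting the finite normal subgroup $\pi_1(X_y)/H'$ trivially; such a virtual splitting over a finite kernel is not automatic (it would follow from residual finiteness of $\pi_1(\tilde X)/H'$, which is not available), whereas the section makes it trivial. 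Finally, even if the spreading out were done for $H'$, you would have realized $\tau^{H'}\circ\rho$, not $\tau^{H,\chi}\circ\rho$; the ``$\chi$-isotypic bookkeeping'' needed to pass from $H'$ back to $H$ and to the idempotent cut $e\bigl(R^1(f\circ p)_*\Q^\vee\bigr)$ is waved at rather than carried out. So the proposal records the correct strategy at the level of Hodge theory but leaves the genuinely nontrivial geometric step unproved.
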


\begin{proof}
Let $\Gamma=\pi_1(X_y)$ and 
 let $r:\Gamma\to G=\Gamma/H$ denote projection. Let $e\in \Q[G]$ be the central idempotent
whose image is $A_\chi$.
After passing to an \'etale cover $p: Y_1\to Y$, we can assume
$\rho(p_*\pi_1(Y_1))\subseteq
\Dom(\tau^{H,\chi})$.  The generic fibre of $X\times_Y Y_1\to
Y_1$ admits a rational point defined over some finite extension $K$ of the
function field $\C(Y_1)$. Let $\tilde Y$ be a desingularization of the normalization of $Y_1$
in $K$. Let $\tilde X= X\times_Y \tilde Y$. Then the map $\tilde X\to
\tilde Y$ possesses a section. Therefore $\pi_1(\tilde X)$ is the
semidirect product $\Gamma\rtimes \pi_1(\tilde Y)$.
Let $C\subset Y$ be a curve given as a complete intersection of ample divisors in
general position. Let $\tilde C\subset \tilde Y$ denote an irreducible
component of the preimage of $C$.  Let $U$ be the complement of the
set of branch points of  $\tilde C\to C$, and let $\tilde U\subset
\tilde C$ denote the preimage.  Consider the diagram
$$
\xymatrix{
 \pi_1(\tilde U)\ar[r]\ar[d]^{\alpha} & \pi_1(\tilde Y)\ar^{p_*}[d] \\ 
 \pi_1(U)\ar[r]^{\beta} & \pi_1(Y)
}
$$
By a suitable Lefschetz hyperplane theorem \cite[p 153]{gm}, we obtain
a surjection $\pi_1(C)\to \pi_1(Y)$.  We have a surjection
$\pi_1(U)\to \pi_1(C)$. Combining these two assertions
shows that $\beta$ is surjective. Covering  space theory shows the
image of $\alpha$  has
finite index in $\pi_1(U).$  After  chasing the diagram  the other way, we can conclude $p_*\pi_1(\tilde Y)$ has
finite index in $\pi_1(Y)$.

Applying lemma \ref{lemma:gt} yields normal subgroup $T\subset
\pi_1(\tilde X)$ with $\pi_1(\tilde X)/T=G$. Let $p:Z\to \tilde X$ be the corresponding
Galois \'etale cover.
 Then $\tau^{H}\circ \rho$ is
the monodromy representation of the local system $\bigcup_y
H_1(Z_y,\Q)$ which can be identified with
$R^1 (f\circ p)_*\Q^\vee$. The latter clearly underlies  a polarized variation
of Hodge structure of  type $\{(-1,0), (0,-1)\}$.  Note that $G$ acts
on this by automorphisms. The representation
$\tau^{H,\chi}\circ \eta$ corresponds to the sub variation of Hodge
structure $e(R^1 (f\circ p)_*\Q^\vee)$.
\end{proof}

\begin{proof}[Proof of theorem \ref{thm:main1}]

Let $Z$ be the identity component of the  Zariski closure of the image
of $\tau^{H,\chi}\circ \rho$, and let $\cH$ be the corresponding variation of Hodge structure.
By the theorem of Andr\'e \cite[\S 5 thm 1]{andre}, $Z$ is a normal
subgroup of the derived group $DMT(\cH_x)$ of the  Mumford-Tate group of a very
general fibre $\cH_x$. Observe that $DMT(\cH_x)=DSMT(\cH_x)$ and this is isogenous to $SMT(\cH_x)$
because the last group is semisimple.
 Therefore   $DSMT(\cH_X)$ is
semisimple of  symplectic Hermitian type by theorem \ref{thm:mumford}.
Therefore $Z$ also has the same property.
\end{proof}

In the previous set up, given  $\gamma\in \pi_1(Y)$ some positive power
 $\gamma^n$ will lie $\Dom(\tau^H)$. By the same technique, we get
 further constraints.

\begin{prop}
With the same notation as in theorem~\ref{thm:main1},
  suppose that  $\gamma\in \pi_1(Y)$ is a loop around a
  smooth boundary divisor of some smooth compactification. Then
  $\tau^{H,\chi}\circ \rho(\gamma^n)$ is  quasi-unipotent for all $n$ as above.
\end{prop}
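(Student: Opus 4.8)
The plan is to reduce the statement to the classical local monodromy theorem for a polarizable variation of Hodge structure. The key observation is that, although $\gamma$ itself need not lie in $\Dom(\tau^{H,\chi})$, some positive power $\gamma^m$ does; and quasi-unipotence is insensitive to passing to powers and roots, so it suffices to prove $\tau^{H,\chi}\circ\rho(\gamma^m)$ is quasi-unipotent. First I would apply Lemma~\ref{lemma:key} to produce the generically finite cover $p:\tilde Y\to Y$ with $p_*\pi_1(\tilde Y)$ of finite index inside $\Dom(\tau^{H,\chi})$ and with $\tau^{H,\chi}\circ\rho\circ p_*$ realized as the monodromy representation of a polarizable variation of Hodge structure $\mathcal{H}$ of type $\{(-1,0),(0,-1)\}$ on $\tilde Y$.

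Next I would analyze the local picture near the boundary divisor. Choose a smooth compactification $\bar Y$ in which $\gamma$ is a small loop around a smooth component $D$ of the boundary, and let $\bar Y'$ be a smooth compactification of $\tilde Y$ such that $p$ extends to a morphism $\bar p:\bar Y'\to\bar Y$; after blowing up we may assume $\bar p^{-1}(D)$ is a normal crossings divisor. Some power $\gamma^m$ lifts to a loop $\tilde\gamma$ in $\tilde Y$ around a component $D'$ of $\bar p^{-1}(D)$ (with $m$ the ramification index of $\bar p$ along $D'$), and $p_*(\tilde\gamma)=\gamma^m$. Since $\mathcal{H}$ is a polarizable $\Z$-variation of Hodge structure on $\tilde Y$ which extends across a generic point of $D'$ in $\bar Y'$, the local monodromy theorem of Borel--Landman (see Schmid) applies: the monodromy of $\mathcal{H}$ around $D'$, namely $\tau^{H,\chi}\circ\rho(\gamma^m)$, is quasi-unipotent. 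Hence $\tau^{H,\chi}\circ\rho(\gamma)$ is quasi-unipotent as well, since a matrix whose $m$-th power is quasi-unipotent is itself quasi-unipotent, and similarly all its powers are.

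The main obstacle is bookkeeping with the compactifications: one must be careful that the cover $p$ of Lemma~\ref{lemma:key}, which is constructed only as a generically finite morphism of \emph{smooth quasiprojective} varieties, can be chosen (after further blow-ups, which do not change $\pi_1$ and do not affect the conclusion) so that it extends to a morphism of smooth compactifications with normal crossings total transform of $D$, and that the loop $\gamma^m$ around $D$ genuinely lifts to a loop around a single boundary component upstairs. Once this geometric setup is in place, the Hodge-theoretic input is the off-the-shelf monodromy theorem and the argument is formal. One should also note that the variation $\mathcal{H}$ a priori is defined on $\tilde Y$, not on a neighborhood of $D'$ in $\bar Y'$, but this is exactly the hypothesis under which the local monodromy theorem is stated, so no extension of the variation across $D'$ is needed --- only that $\tilde\gamma$ is a loop about a boundary divisor of a smooth compactification of the base of the variation.
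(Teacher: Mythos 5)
Your proposal is essentially the paper's own argument: the paper's proof simply replaces $n$ by a multiple so that $\gamma^n\in p_*\pi_1(\tilde Y)$ with $p:\tilde Y\to Y$ as in Lemma~\ref{lemma:key}, and then invokes the quasi-unipotence (local monodromy) theorem in Schmid's paper, exactly the Borel--Schmid input you use. Your write-up just makes explicit the compactification and loop-lifting bookkeeping (and the power/root reduction) that the paper leaves implicit, so it is the same route, carried out in more detail.
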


\begin{proof}
After replacing $n$ by a multiple, we can assume that $\gamma^n\in
p_*\pi_1(\tilde Y)$, where $p:\tilde Y\to Y$ is as in lemma~\ref{lemma:key}. 
  The result  now follows from \cite[lemma 4.5]{schmid}.
\end{proof}

\section{Fibered fundamental groups}

Let $f:X\to Y$ be a projective map of smooth quasiprojective varieties
such that $f$ has connected fibres. Then we have a surjection
$\pi_1(f):\pi_1(X)\to \pi_1(Y)$. The kernel of this map may be quite
large, however. We first want to factor this through a map with better
properties. Given a divisor $D\subset Y$ with simple normal
crossings, the restriction $\pi_1(Y-D)\to \pi_1(Y)$ is surjective. The
kernel is the normal subgroup generated by loops $\gamma_i$ about the
components $D_i$. If $m_i>1$ are integers, define the orbifold
fundamental group $\pi_1^{orb}(Y, \sum m_iD_i)$ as the quotient of $\pi_1(Y-D)$
be the normal subgroup generated by $\gamma_i^{m_i}$. This can be
interpreted as the fundamental group of $Y$ with a suitable orbifold
structure, but we won't need this. After removing a closed subset 
$Z\subset Y$ of codimension at least $2$, we can suppose that the discriminant
of $f$ is a smooth divisor $D=\sum D_i$, and
that $f^{-1}D$ is a divisor with normal crossings  such that the
restriction of $f$ to  the intersections
of components are submersions over $D$. Let $m_i$ denote the greatest
common divisor of the multiplicities of the components of $f^{-1}D_i$.
The following is proved in \cite[lemma 3.5]{arapura}.

\begin{prop}\label{prop:orb}
  Let $y_0\in Y-D-Z$. Then $\pi_1(f)$ factors  through a surjection
  $\phi:\pi_1(X)\to \pi_1^{orb}(Y,\sum m_iD_i) $ such that
$$
\xymatrix{
 \pi_1(f^{-1}(y_0))\ar[d]^{r_1}\ar[r] & \pi_1(X-f^{-1}(D\cup Z))\ar[d]^{r_2}\ar[r]^{\psi} & \pi_1(Y-D-Z)\ar[d]^{r_3}\ar[r] & 1 \\ 
 \ker(\phi)\ar[r] & \pi_1(X-f^{-1}Z)\ar[r]^{\phi} & \pi_1^{orb}(Y,\sum m_iD_i)
 \ar[r]  & 1
}
$$
commutes and has exact rows.
The map  $r_1:\pi_1(f^{-1}(y_0))\to \ker\phi$ is surjective. In
particular, $\ker \phi$ is finitely generated.
\end{prop}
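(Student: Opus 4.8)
The plan is to reduce everything to the pure braid picture of a Lefschetz fibration. First I would set up the diagram: the top row is the standard homotopy exact sequence for the locally trivial fibre bundle $f:X-f^{-1}(D\cup Z)\to Y-D-Z$ obtained from Ehresmann's theorem (after shrinking, $f$ restricted to this locus is a submersion with connected fibres), so exactness and surjectivity of $\psi$ are automatic. The vertical maps $r_2,r_3$ are the natural quotients by the normally generated subgroups coming from loops $\gamma_i$ about the $D_i$ (resp. their preimages). The content is: (i) these quotients are compatible, i.e. $\psi$ descends to $\phi$; (ii) the bottom row is exact; and (iii) $r_1$ is surjective so that $\ker\phi$ is a quotient of the finitely generated group $\pi_1(f^{-1}(y_0))$.

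For (i), the key observation is that a loop $\gamma_i$ about $D_i$ lifts, under a local trivialization of $f$ near a smooth point of $D_i$ over which $f^{-1}D_i$ has multiplicity structure with gcd $m_i$, to a loop in $X-f^{-1}(D\cup Z)$ whose class maps to $\gamma_i^{m_i}$ — this is exactly the local computation that the ramification of $f$ along $D_i$ is $m_i$, and it is where the hypothesis that $f$ restricted to intersections of components is a submersion over $D$ is used (to guarantee the local normal form $f(u_1,\dots)=\prod u_j^{a_j}$ with $\gcd a_j=m_i$ transverse to $D_i$). Hence the normal subgroup $\langle\!\langle\gamma_i^{m_i}\rangle\!\rangle$ downstairs is the image of a normal subgroup upstairs, so $\psi$ passes to the quotient. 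For (ii), one checks $\ker\phi$ is normally generated in $\pi_1(X-f^{-1}Z)$ by the images of $\ker\psi=\im\pi_1(f^{-1}(y_0))$ together with the $\gamma_i^{m_i}$-type loops; a diagram chase using the surjectivity of $r_2$ and $r_3$ and the exactness of the top row yields exactness of the bottom. The original reference \cite[lemma 3.5]{arapura} supplies the details, so the argument here is really a recollection.

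The surjectivity of $r_1$, which gives finite generation of $\ker\phi$, is the one point that deserves care and is the main obstacle. It says every element of $\ker\phi$ comes from a loop in the central fibre $f^{-1}(y_0)$. I would argue: take $\delta\in\ker\phi\subset\pi_1(X-f^{-1}Z)$; lift it along the surjection $r_2$ to some $\tilde\delta\in\pi_1(X-f^{-1}(D\cup Z))$; then $\psi(\tilde\delta)\in\ker r_3=\langle\!\langle\gamma_i^{m_i}\rangle\!\rangle$. Using that $\psi$ is surjective with kernel $\im\pi_1(f^{-1}(y_0))$ and that each $\gamma_i^{m_i}$ is hit (upstairs) by a loop supported near the fibre — more precisely by a loop one may slide back into a fixed fibre once a vanishing-cycle/orbifold-section structure is fixed — one modifies $\tilde\delta$ by elements of $\im\pi_1(f^{-1}(y_0))$ and by such "local" loops until its image under $\psi$ is trivial, i.e. until it lies in $\im\pi_1(f^{-1}(y_0))$; its image downstairs is then $\delta$. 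The delicate step is ensuring the loops realizing $\gamma_i^{m_i}$ upstairs can themselves be taken in (a conjugate of) $\im\pi_1(f^{-1}(y_0))$ — equivalently, that the orbifold relations $\gamma_i^{m_i}=1$ added downstairs kill nothing beyond what the fibre already contributes; this is precisely the reason the multiplicity gcd $m_i$ (rather than $1$) appears, and it is handled by the explicit local model near $D_i$. Finite generation of $\ker\phi$ then follows since $\pi_1(f^{-1}(y_0))$ is finitely generated, being the fundamental group of a compact manifold (the fibre is projective).
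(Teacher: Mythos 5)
The paper gives no internal argument for this proposition (it is quoted from \cite[lemma 3.5]{arapura}), so your proposal can only be measured against the expected standard proof. Your overall plan is that proof: top row from Ehresmann plus the homotopy exact sequence of the bundle over $Y-D-Z$, vertical maps the quotients by meridians, descent of $\psi$ to $\phi$ because every meridian $\mu_{ij}$ about a component $E_{ij}\subset f^{-1}D_i$ of multiplicity $a_{ij}$ maps to a conjugate of $\gamma_i^{a_{ij}}$ with $m_i\mid a_{ij}$, and then a diagram chase. Up to some loose phrasing in your step (i) (for the descent one needs $\psi(\ker r_2)\subseteq\ker r_3$, i.e.\ divisibility, not that $\langle\!\langle\gamma_i^{m_i}\rangle\!\rangle$ is an image from upstairs), this part is fine.

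The genuine problem is in your treatment of the surjectivity of $r_1$. You propose to realize $\gamma_i^{m_i}$ by a loop ``supported near the fibre'' which can be ``slid back into a fixed fibre,'' and you flag as the delicate point that these loops can be taken in a conjugate of $\im\pi_1(f^{-1}(y_0))$. That cannot work: any loop lying in a fibre has trivial image under $\psi$, while $\gamma_i^{m_i}$ is in general nontrivial in $\pi_1(Y-D-Z)$; moreover there need not exist any single loop upstairs mapping to $\gamma_i^{m_i}$, since $m_i$ is only the gcd of the $a_{ij}$ and the meridians give the powers $\gamma_i^{a_{ij}}$ (up to conjugacy), not $\gamma_i^{m_i}$ itself. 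The correct mechanism is to correct the lift by elements of $\ker r_2$, not by fibre loops: $\ker r_2$ is the normal closure of the meridians of the components of $f^{-1}D$ (they die in $\pi_1(X-f^{-1}Z)$ because they bound small transverse disks), so $\psi(\ker r_2)$ is a normal subgroup of $\pi_1(Y-D-Z)$ containing every $\gamma_i^{a_{ij}}$; since these are all powers of the single element $\gamma_i$, B\'ezout applied in the quotient gives $\gamma_i^{m_i}\in\psi(\ker r_2)$, hence $\ker r_3=\langle\!\langle\gamma_i^{m_i}\rangle\!\rangle\subseteq\psi(\ker r_2)$. Now take $\delta\in\ker\phi$, lift it to $\tilde\delta$ with $r_2(\tilde\delta)=\delta$; then $\psi(\tilde\delta)\in\ker r_3\subseteq\psi(\ker r_2)$, so there is $k\in\ker r_2$ with $\psi(k)=\psi(\tilde\delta)$, and $\tilde\delta k^{-1}\in\ker\psi=\im\pi_1(f^{-1}(y_0))$ still maps to $\delta$ under $r_2$. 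This yields $\delta\in\im r_1$, and finite generation follows as you say from compactness of the fibre. With this replacement your argument is complete; as written, the key step rests on a false claim.
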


If  $f$ is flat then $f^{-1}Z$ has codimension
$\ge 2$. Consequently $\pi_1(X-f^{-1}Z)\cong \pi_1(X)$ et cetera. 

\begin{cor}
  Assuming flatness of $f$,  $Z$ can be omitted in the statement of the proposition.
\end{cor}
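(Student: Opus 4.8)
The plan is to verify that the codimension hypothesis used to invoke Proposition~\ref{prop:orb} degenerates trivially when $f$ is flat, so that no preliminary excision of $Z$ is needed. Recall that $Z$ was introduced only to arrange that the discriminant of $f$ on $Y-Z$ is a smooth divisor and that $f^{-1}D$ has normal crossings over it; the statement of the proposition then required deleting $f^{-1}Z$ from $X$. So the whole content of the corollary is the observation that under flatness, $f^{-1}Z$ is already small enough that removing it does not change $\pi_1(X)$, and more precisely that the vertical maps $r_2,r_3$ and the bottom row of the diagram in Proposition~\ref{prop:orb} can be formed with $X$ in place of $X-f^{-1}Z$ and $Y$ in place of $Y$.

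First I would note the elementary dimension estimate: since $Z\subset Y$ has codimension $\ge 2$ and $f$ is flat, every fibre of $f$ has the same dimension $\dim X-\dim Y$, hence $\dim f^{-1}Z = \dim Z + (\dim X-\dim Y)\le \dim X-2$; that is, $f^{-1}Z\subset X$ has codimension $\ge 2$. (This is exactly the sentence already stated in the excerpt right after Proposition~\ref{prop:orb}; I would just record it as the first step.) Second, I would invoke the standard fact that removing a closed analytic (equivalently, Zariski-closed) subset of complex codimension $\ge 2$ from a smooth variety does not change the fundamental group: the inclusion $X-f^{-1}Z\hookrightarrow X$ induces an isomorphism on $\pi_1$, and likewise $Y-Z\hookrightarrow Y$ need not even be used since $Y$ itself is untouched in the bottom row except through the orbifold structure, which was defined on $Y$ to begin with. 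Third, with these two isomorphisms in hand, the commuting diagram with exact rows of Proposition~\ref{prop:orb} rewrites verbatim with $\pi_1(X-f^{-1}Z)$ replaced by $\pi_1(X)$, and the surjectivity of $r_1$ together with the finite generation of $\ker\phi$ transport along the isomorphism. Hence the conclusion of the proposition holds with $Z$ omitted.

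The only point that needs a word of care is the second step: one wants the complement statement for $\pi_1$ in the analytic topology, which is valid because $f^{-1}Z$, being Zariski-closed of codimension $\ge 2$ in the smooth variety $X$, has real codimension $\ge 4$, and a general position / transversality argument (or Hamm--Lê type results) shows that loops and homotopies of loops can be pushed off it. I do not expect this to be a genuine obstacle, as it is a textbook fact; indeed the excerpt already asserts ``$\pi_1(X-f^{-1}Z)\cong\pi_1(X)$ et cetera,'' so the corollary is essentially a bookkeeping consequence of that remark, and the proof amounts to combining the flatness dimension count with this standard isomorphism and then re-reading the diagram of Proposition~\ref{prop:orb}.
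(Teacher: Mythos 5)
Your proposal is correct and is exactly the paper's argument: the paper justifies the corollary by the one-line remark that flatness forces $f^{-1}Z$ to have codimension $\ge 2$, so that $\pi_1(X-f^{-1}Z)\cong\pi_1(X)$ (and similarly for the other groups in the diagram), after which the statement of Proposition~\ref{prop:orb} transfers verbatim. Your write-up simply spells out the dimension count and the standard codimension-two $\pi_1$ isomorphism in more detail, including its application to the top row of the diagram, which the paper compresses into ``et cetera.''
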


Therefore if $f$ is flat, we have an exact sequence
$$1\to \ker(\phi)\to \pi_1(X)\to \pi_1^{orb}(Y)\to 1$$
with finitely generated kernel, 
where we write $\pi_1^{orb} (Y)=\pi_1^{orb}(Y,\sum m_iD_i)$ for
simplicity. This gives an outer  action of $\rho: \pi_1^{orb} (Y)\to \Out(\ker\phi)$.
Given a finite index subgroup $H\subset \ker(\phi)$. Let $\sigma^H$
denote the partial representation of $\Out(\ker\phi)$  on
$H^{ab}_\Q$ with domain $\Stab(H)$. We write $\sigma =\sigma^H$ when
$H=\ker(\phi)$ is the full group.

\begin{prop}\label{prop:phi} 
  With the assumptions and notation of the previous paragraph, the identity component
  of the Zariski closure of the image of $\sigma^H\circ \rho$ is semisimple  of
  symplectic Hermitian type.
\end{prop}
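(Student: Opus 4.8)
The plan is to deduce Proposition~\ref{prop:phi} from Theorem~\ref{thm:main1} by restricting over the smooth locus of $f$ and comparing $\ker\phi$ with the fundamental group of a fibre. Write $D$ for the discriminant divisor and set $U=Y-D$, $X_U=X-f^{-1}D$, $F=X_{y_0}$. By flatness and Proposition~\ref{prop:orb}, $f$ restricts to a smooth projective family $X_U\to U$ with fibre $F$, and there are surjections $r_1\colon\pi_1(F)\twoheadrightarrow\ker\phi$ and $r_3\colon\pi_1(U)\twoheadrightarrow\pi_1^{orb}(Y)$. Let $\rho_U\colon\pi_1(U)\to\Out(\pi_1(F))$ be the nonabelian monodromy of $X_U\to U$. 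Chasing the diagram of Proposition~\ref{prop:orb} and using Corollary~\ref{cor:rho}, one sees that $N:=\ker(r_1)$ coincides with $\ker(\pi_1(F)\to\pi_1(X))$, whose image in $\pi_1(X_U)$ is the intersection of the two normal subgroups $\im\pi_1(F)$ and $\ker(\pi_1(X_U)\to\pi_1(X))$; since the conjugation action of $\pi_1(U)$ on $\im\pi_1(F)$ realizes $\rho_U$, the subgroup $N$ is $\rho_U$-stable, and $\rho\circ r_3$ is $\rho_U$ followed by the resulting descent $\{[\alpha]\in\Out(\pi_1(F)):\alpha(N)=N\}\to\Out(\ker\phi)$. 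As $r_3$ is onto, replacing $\sigma^H\circ\rho$ by $\sigma^H\circ\rho\circ r_3$ does not change the identity component of its Zariski closure, so I may work over $U$.

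The next step is to lift $H$ to the fibre. Put $\tilde H=r_1^{-1}(H)$, a finite index subgroup of $\pi_1(F)$ with $N\subseteq\tilde H$ and $\tilde H/N=H$, and let $L\subseteq\tilde H$ be a characteristic finite index subgroup of $\pi_1(F)$ — for instance the intersection of the finitely many subgroups of index $[\pi_1(F):\tilde H]$, which includes $\tilde H$. Set $H_0:=LN/N\subseteq H$, a finite index normal subgroup of $\ker\phi$. Since $\rho_U$ preserves both $L$ (characteristic) and $N$, it preserves $L\cap N$, so the quotient map $L^{ab}_\Q\twoheadrightarrow (H_0)^{ab}_\Q$ is equivariant for $\tau^L\circ\rho_U$ and $\sigma^{H_0}\circ\rho\circ r_3$ on a suitable finite index $\Gamma_0\subseteq\pi_1(U)$ (chosen so that $\rho_U(\Gamma_0)$ lands in the domains of all the partial representations involved); shrinking to $\Gamma_0$ is harmless for identity components of Zariski closures.

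Now I would apply Theorem~\ref{thm:main1} to $X_U\to U$ with $L$, a finite index normal subgroup of $\pi_1(F)$. By Lemma~\ref{lemma:key} — taken with $\chi$ ranging over all characters, i.e.\ without the idempotent $e$ — $\tau^L\circ\rho_U$ is, after pullback along a generically finite cover, the monodromy of a polarizable variation of Hodge structure of type $\{(-1,0),(0,-1)\}$; so, exactly as in the proof of Theorem~\ref{thm:main1} (Andr\'e's theorem together with Theorem~\ref{thm:mumford}), the identity component $Z'$ of the Zariski closure of $\tau^L\circ\rho_U$ is semisimple of symplectic Hermitian type. Since the image of a Zariski dense subgroup under a homomorphism of algebraic groups is dense in the (closed) image of the ambient group, the equivariant surjection of the previous paragraph shows that the identity component of the Zariski closure of $\sigma^{H_0}\circ\rho$ is the image of $Z'$ under an induced homomorphism of $\Q$-groups, hence a quotient of $Z'$. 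A quotient of a semisimple $\Q$-group of symplectic Hermitian type is again of this type: it is semisimple, and its $\Q$-simple factors are among those of $Z'$, so its noncompact simple real factors remain on the list $SU(p,q),\,SO(2,p)^o,\,SO^*(2p),\,Sp(2g,\R)$. Finally, as $H_0\subseteq H$ has finite index, transfer and inclusion realize $H^{ab}_\Q$ as a direct summand of $(H_0)^{ab}_\Q$, equivariantly for the finite index subgroup of $\pi_1^{orb}(Y)$ whose $\rho$-image stabilizes $H$ and $H_0$; therefore the identity component of the Zariski closure of $\sigma^H\circ\rho$ is a quotient of that of $\sigma^{H_0}\circ\rho$, and so is again semisimple of symplectic Hermitian type.

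The step I expect to be the genuine work is the first one: verifying that the orbifold outer action $\rho$ factors through the honest nonabelian monodromy $\rho_U$ of the smooth family over $Y-D$, and in particular that $\rho_U$ stabilizes $N=\ker(r_1)$. With that compatibility in hand, the remaining ingredients — the transfer/summand manipulations, the innocuous passage to finite index subgroups, and the observation that quotients of symplectic Hermitian $\Q$-groups stay symplectic Hermitian — are routine.
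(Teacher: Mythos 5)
Your argument is correct and follows essentially the same route as the paper: restrict to the smooth family over $Y-D$, use Proposition~\ref{prop:orb} and Corollary~\ref{cor:rho} to identify the orbifold outer action with nonabelian monodromy, produce an equivariant surjection from an abelianization governed by Theorem~\ref{thm:main1} (via Lemma~\ref{lemma:key}) onto $H^{ab}_\Q$, and conclude with the stability of semisimple symplectic Hermitian type under quotients. The only difference is bookkeeping: the paper simply takes $K=\bar r_1^{-1}(H)\subset \im\pi_1(X_{y_0})$ and the equivariant surjection $K^{ab}_\Q\twoheadrightarrow H^{ab}_\Q$, whereas you interpose a characteristic subgroup $L$ and a transfer argument to deal with normality and to pass from $H_0$ back to $H$ --- a slightly fussier but harmless refinement of the same proof.
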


\begin{proof}
From proposition \ref{prop:orb}, we deduce a diagram
$$
\xymatrix{
 1\ar[r]&\im\pi_1(X_{y_0})\ar[d]^{\bar r_1}\ar[r] & \pi_1(X-f^{-1}D)\ar[d]^{r_2}\ar[r]^{\psi} & \pi_1(Y-D)\ar[d]^{r_3}\ar[r] & 1 \\ 
1\ar[r]& \ker(\phi)\ar[r] & \pi_1(X)\ar[r]^{\phi} & \pi_1^{orb}(Y)
 \ar[r]  & 1
}
$$
where $\bar r_1$ is surjective.
Corollary \ref{cor:rho} shows that nonabelian monodromy on $\im \pi_1(X_{y_0})$ concides with the
conjugation action of $\pi_1(Y-D)$ coming from the first row above.
  Let $K\subset \im\pi_1(X_{y_0})$ be the preimage of $H$ under the map $\bar r_1$.
  Then $K^{ab}_\Q$ surjects onto $H^{ab}_\Q$, and this is compatible with the partial actions
of $\pi_1(Y-D)$ and $\pi_1^{orb}(Y)$ given by conjugation.

   Therefore the Zariski closure of the image of 
  $\sigma^H\circ \rho$ is a quotient of the Zariski closure of  the image of 
$\Dom(\tau^K)\subseteq \pi_1(Y-D)$ in $GL(K^{ab}_\Q)$. A quotient of a  semisimple
group of symplectic Hermitian type is again semisimple of symplectic  Hermitian type. 
Therefore the proposition follows from theorem \ref{thm:main1}.
\end{proof}

For the remainder of this section, we focus on the case where $Y$ is a smooth projective
curve of genus $g$.
Its fundamental group is given by
$$\Gamma_g=\langle \alpha_1,\ldots, \alpha_{2g}\mid
[\alpha_1,\alpha_2]\ldots [\alpha_{2g-1},\alpha_{2g}]=1\rangle$$
Given  integers $m_1,\ldots, m_n>
1$, let
\begin{equation*}
\begin{split}
 \Gamma_{g; m_1,\ldots, m_n} = &\Gamma_{g;\vec{m}}=
\langle \alpha_1,\ldots, \alpha_{2g},\beta_1,\ldots,\beta_n\mid\\
&[\alpha_1,\alpha_2]\ldots [\alpha_{2g-1}, \alpha_{2g}] \beta_1\ldots \beta_n= \beta_1^{m_1}=\ldots \beta_n^{m_n}=1\rangle
\end{split}
\end{equation*}
  This is $\pi_1^{orb}(Y,\sum m_ip_i)$ for some $p_i\in Y$.
 By \cite{fox}, there exists a torsion free normal subgroup
 $\Gamma'\subset \Gamma_{g;\vec{m}}$ of finite index. In more  geometric terms, 
 $\Gamma'$  is the ordinary fundamental group of a  curve $Y'$, where $r:Y'\to Y$  is a 
Galois cover with ramification divisor $r^*(\sum m_ip_i)$.
Consequently $\Gamma'=\Gamma_h$ for some $h$, and  $2h-2$ is a positive integer multiple of
$$2g-2+\sum \frac{m_i-1}{m_i}$$
Let us say that $\Gamma_{g;\vec{m}}$ is hyperbolic if the above expression is greater
than zero.

\begin{lemma}\label{lemma:HGamma}
  The following statements hold.
  \begin{enumerate}
  \item[(a)] $\dim H^1(\Gamma_{g;\vec{m}},\Q) =2g$
\item[(b)] $H^2(\Gamma_{g;\vec{m}},\Q) \cong H^2(\Gamma',\Q)$ is one dimensional.
  \end{enumerate}
\end{lemma}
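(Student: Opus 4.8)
The plan is to compute both cohomology groups by hand. For (a), since $\Q$ is a trivial module there are no $1$-coboundaries, so $H^1(\Gamma_{g;\vec m},\Q)=\Hom(\Gamma_{g;\vec m},\Q)=\Hom(\Gamma_{g;\vec m}^{ab}\otimes\Q,\Q)$, and it suffices to determine $\Gamma_{g;\vec m}^{ab}\otimes\Q$. Abelianizing the presentation, all commutators vanish, the long relation becomes $\bar\beta_1+\dots+\bar\beta_n=0$, and each relation $\beta_i^{m_i}=1$ becomes $m_i\bar\beta_i=0$. Hence $\Gamma_{g;\vec m}^{ab}$ is the direct sum of the free abelian group of rank $2g$ on the images of $\alpha_1,\dots,\alpha_{2g}$ and the finite group $\bigl(\bigoplus_i \Z/m_i\bigr)/\langle \sum_i\bar\beta_i\rangle$. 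Tensoring with $\Q$ kills the torsion, so $\Gamma_{g;\vec m}^{ab}\otimes\Q\cong\Q^{2g}$ and $\dim H^1(\Gamma_{g;\vec m},\Q)=2g$.

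For (b) I would use the torsion-free normal subgroup $\Gamma'=\Gamma_h$ of finite index in $\Gamma:=\Gamma_{g;\vec m}$ provided by \cite{fox}, together with the finite quotient $G=\Gamma/\Gamma'$, which is the deck group of the Galois covering $r:Y'\to Y$. We restrict to the case where $\Gamma_{g;\vec m}$ is infinite (which is the only case relevant to the paper; if the orbifold is spherical then $\Gamma_{g;\vec m}$ is finite and there is nothing of substance); then $h\geq 1$, so $Y'$ is aspherical and $\Gamma'=\pi_1(Y')$ with $Y'$ a $K(\Gamma',1)$, whence $H^2(\Gamma',\Q)=H^2(Y',\Q)\cong\Q$. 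Because $[\Gamma:\Gamma']$ is invertible in $\Q$, the standard transfer argument for finite-index subgroups shows that restriction $H^2(\Gamma,\Q)\to H^2(\Gamma',\Q)$ is injective with image the $G$-invariants $H^2(\Gamma',\Q)^{G}$. Finally $G$ acts on $Y'$ by automorphisms of the curve, hence by orientation-preserving homeomorphisms, so it acts trivially on $H^2(Y',\Q)\cong\Q$; therefore $H^2(\Gamma',\Q)^{G}=H^2(\Gamma',\Q)$ and restriction is an isomorphism $H^2(\Gamma,\Q)\xrightarrow{\sim}H^2(\Gamma',\Q)\cong\Q$, which is (b).

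The only step that is not pure bookkeeping is the triviality of the $G$-action on $H^2(\Gamma',\Q)$, and this is exactly where the complex-geometric nature of $\Gamma'=\pi_1(Y')$ and of the covering $Y'\to Y$ enters, forcing $G$ to preserve the complex orientation. Equivalently, one could package (b) by observing that $\Gamma_{g;\vec m}$ acts properly with finite point stabilizers and orientation-preservingly on the contractible universal cover of the underlying $2$-orbifold, hence is an orientable rational Poincar\'e duality group of dimension $2$, so that $H^2(\Gamma_{g;\vec m},\Q)\cong H_0(\Gamma_{g;\vec m},\Q)=\Q$; the transfer computation has the advantage of directly exhibiting the isomorphism with $H^2(\Gamma',\Q)$. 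As a sanity check, $\dim H^0-\dim H^1+\dim H^2=1-2g+1=2-2g$, which is consistent with $\chi(\Gamma')=2-2h$ and Riemann--Hurwitz for $r$.
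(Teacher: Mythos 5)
Your proof is correct and follows essentially the same route as the paper: (a) by abelianizing the presentation, and (b) by passing to the torsion-free finite-index subgroup $\Gamma'=\Gamma_h=\pi_1(Y')$ and using that the $G$-action on $H^2(Y',\Q)$ is trivial because $G$ acts holomorphically, hence orientation-preservingly. The only cosmetic difference is that you invoke the transfer for the finite-index reduction where the paper uses the Hochschild--Serre spectral sequence, which with $\Q$-coefficients gives the identical identification $H^2(\Gamma_{g;\vec m},\Q)\cong H^2(\Gamma',\Q)^G$; your explicit restriction to the infinite (aspherical) case is a reasonable precision that the paper leaves implicit.
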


\begin{proof}
  The first statement follows immediately from the presentation. 
Let $G=\Gamma/\Gamma'$.
The  Hochschild-Serre spectral sequence gives an isomorphism
$$H^2(\Gamma_{g;\vec{m}},\Q) \cong H^2(\Gamma',\Q)^{G}$$
The generator of  $H^2(\Gamma',\Q)=H^2(Y',\Q)$ is invariant under $G$.
This proves (b).
\end{proof}

Suppose that $f:X\to Y$ is surjective holomorphic map from a
 smooth projective variety.   Recall that a fibre is a multiple fibre
 if the greatest common divisor of the multiplicities of the
 components is greater than $1$.
Suppose $f$ has $n$ multiple fibres with multiplicity $m_i$. Then from the previous
discussion, we obtain a surjective homomorphism $\phi:\pi_1(X)\to \Gamma_{g;\vec{m}}$
with finitely generated kernel.

\begin{thm}[Catanese {\cite[thm 5.14]{catanese}}]\label{thm:cat}
  Conversely, any surjective homomorphism  $\phi:\pi_1(X)\to \Gamma_{g;m_1,\ldots, m_n}$ with
  finitely generated kernel must  arise in the above manner from a holomorphic map $X\to Y$ to a genus $g$ curve
  with exactly $n$ multiple fibres of muliplicity $m_1,\ldots m_n$.
\end{thm}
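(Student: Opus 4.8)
The strategy is to realize $\Gamma_{g;\vec m}$ as the orbifold fundamental group of $(Y,\sum m_i p_i)$ and then to factor the given map $\phi:\pi_1(X)\to\Gamma_{g;\vec m}$ through a holomorphic map to $Y$, using the Siu--Beauville--Catanese machinery. First I would invoke the structure theorem for surjections onto surface-orbifold groups: the map $\Gamma_{g;\vec m}\to\Gamma_g$ induces a surjection $\pi_1(X)\to\Gamma_g$, and by the standard argument (a rank-$1$ Higgs bundle / harmonic map argument, or Catanese's treatment of isotropic subspaces in $H^1$) a surjection of $\pi_1$ of a smooth projective variety onto $\Gamma_h$ with finitely generated kernel is induced by a fibration $f_0:X\to Y_0$ over a genus-$h$ curve. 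Applied with $h=g$ one gets a fibration $f:X\to Y$; the content then is to see that the orbifold/multiple-fibre data of $f$ match $\vec m$.

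**Key steps.** (1) Produce the fibration $f:X\to Y$ onto a genus-$g$ curve $Y$ from the composite $\pi_1(X)\to\Gamma_{g;\vec m}\to\Gamma_g$; check that after Stein factorization one may take the fibres connected and that $f_*=$ (the composite on $\pi_1$). (2) Apply Proposition~\ref{prop:orb}: after removing a codimension-$\ge 2$ set, $f$ has smooth discriminant $D=\sum D_i'$, and $\pi_1(f)$ factors through $\pi_1^{orb}(Y,\sum m_i'D_i')$ where $m_i'$ is the gcd of multiplicities of the components of $f^{-1}D_i'$. So we have two surjections from $\pi_1(X)$ onto surface-orbifold groups, both factoring $\pi_1(X)\to\Gamma_g$, namely $\phi$ onto $\Gamma_{g;\vec m}$ and the canonical $\psi$ onto $\pi_1^{orb}(Y,\sum m_i'p_i')$. (3) Identify the two: the kernel of a surjection onto a (hyperbolic) orbifold surface group is determined by the associated homomorphism, so one must show $\ker\phi=\ker\psi$ as subgroups of $\pi_1(X)$; then the quotients are isomorphic and, being orbifold groups of the same underlying genus-$g$ curve, the multiplicities and their number must agree — here Lemma~\ref{lemma:HGamma}, which pins down $H^1$ and $H^2$, plus the abelianizations of the torsion elements $\beta_i$, let one read off $n$ and the multiset $\{m_i\}$ from the group. (4) Conclude that the multiple fibres of $f$ are exactly $n$ in number with multiplicities $m_1,\dots,m_n$, i.e. each $m_i'=m_i$ and no extra multiple fibres occur (an extra multiple fibre would enlarge the orbifold group, contradicting $\ker\phi=\ker\psi$).

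**Main obstacle.** The hard part is step (3): showing that the abstract surjection $\phi$ with finitely generated kernel is \emph{geometrically realized} by a fibration whose orbifold data match, rather than merely dominated by some fibration. Two things can go wrong: (i) the fibration $f$ produced in step (1) might have \emph{more} multiple fibres than those coming from the $\beta_i$, so its canonical orbifold quotient $\pi_1^{orb}(Y,\sum m_i'p_i')$ could be a proper quotient of $\Gamma_{g;\vec m}$ rather than equal to it — one must rule this out by comparing kernels, using that $\ker\phi$ is finitely generated and that any further orbifold point would force $\ker\psi\subsetneq\ker\phi$, contradicting that both are normal with isomorphic (hyperbolic, hence co-Hopfian-type) quotients; (ii) conversely some $m_i'$ might be a proper divisor of $m_i$. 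Both are handled by a Hopfian/co-Hopfian argument for orbifold surface groups (they are residually finite and, in the hyperbolic case, not isomorphic to any proper quotient of $\Gamma_{g;\vec m'}$ for different data), combined with the commutative diagram of Proposition~\ref{prop:orb} which forces $r_1:\pi_1(f^{-1}(y_0))\twoheadrightarrow\ker\phi$ and hence controls $\ker\phi$ in terms of the fibre. Since this is Catanese's theorem, I would at this point simply cite \cite[thm 5.14]{catanese} for the delicate bookkeeping, having reduced it to the orbifold-group comparison above.
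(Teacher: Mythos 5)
First, a point of comparison: the paper does not prove this statement at all --- it is quoted as Catanese's theorem, and the citation \cite[thm 5.14]{catanese} is the entire ``proof.'' Your final fallback to that citation therefore matches the paper's treatment, but it also makes your sketch circular if read as a proof: the ``delicate bookkeeping'' you defer to \cite[thm 5.14]{catanese} is precisely the statement being proved. Judged as a proof attempt, the sketch has a genuine gap at step (1). You compose $\phi$ with the quotient $\Gamma_{g;\vec m}\to\Gamma_g$ and invoke the Siu--Beauville--Catanese structure theorem for surjections onto $\Gamma_g$ \emph{with finitely generated kernel}. But the kernel of $\Gamma_{g;\vec m}\to\Gamma_g$ is the normal closure of the torsion elements $\beta_i$, a normal subgroup of infinite index in a Fuchsian group, hence infinitely generated as soon as some $m_i>1$; consequently $\ker(\pi_1(X)\to\Gamma_g)=\phi^{-1}(\ker(\Gamma_{g;\vec m}\to\Gamma_g))$ surjects onto an infinitely generated group and is never finitely generated in exactly the cases where there is something to prove. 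So the theorem you quote does not apply to the composite, and plain Siu--Beauville only yields some fibration over a curve of genus $\ge g$ with no compatibility with $\phi$. Worse, for $g=0$ or $g=1$ (which are allowed, and are hyperbolic once there are enough orbifold points) the composite to $\Gamma_g$ carries little or no information, so step (1) produces no fibration at all.

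The standard way around this --- and the route suggested by the tools the paper itself uses in Section 3 --- is to apply Fox's theorem \cite{fox} to choose a torsion-free normal finite-index subgroup $\Gamma_h\subset\Gamma_{g;\vec m}$, pass to the \'etale cover $X'\to X$ with $\pi_1(X')=\phi^{-1}(\Gamma_h)$, and note that the induced surjection $\pi_1(X')\to\Gamma_h$ still has kernel $\ker\phi$, which \emph{is} finitely generated; the genus-$h$ ($h\ge 2$) case then gives a fibration $X'\to Y'$ inducing it, which one descends through the Galois action of $\Gamma_{g;\vec m}/\Gamma_h$ to a fibration $X\to Y'/\bigl(\Gamma_{g;\vec m}/\Gamma_h\bigr)$, and only then does one carry out your steps (2)--(4), comparing $\phi$ with the canonical surjection of Proposition \ref{prop:orb} (where the identification of kernels needs the Hopfian/residual-finiteness property of orbifold surface groups rather than being automatic). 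In short: your bookkeeping in steps (2)--(4) is the right kind of argument, but step (1) as written fails, and the paper itself sidesteps the whole question by quoting Catanese.
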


We can now prove the main results announced in the introduction.
If  $X$ is a smooth projective variety such that $\pi_1(X)$ surjects onto a nonabelian free group, then
by \cite[cor 5.4, prop 5.13]{catanese} it surjects onto  a  hyperbolic $\Gamma_{g;\vec{m}}$ with finitely generated kernel.
In fact, the hyperbolicity condition is not needed for the results below.

\begin{thm}\label{thm:main2}
Let $X$ be a smooth complex projective variety.
  Suppose that $\phi:\pi_1(X)\to \Gamma_{g;\vec{m}}$ is a surjective
  homomorphism such that  $\ker\phi$ is finitely generated.
For any finite index subgroup $H\subset \ker(\phi)$, the identity component of the Zariski closure of the image of 
  $\sigma^H\circ \rho $ is 
   semisimple  symplectic Hermitian, where
   $\rho: \Gamma_{g;\vec{m}}\to \Out(\ker \phi )$ is the representation associated to the extension.
\end{thm}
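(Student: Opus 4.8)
The plan is to reduce Theorem~\ref{thm:main2} to Theorem~\ref{thm:cat} followed by Proposition~\ref{prop:phi}, the point being merely to verify that the abstract group-theoretic hypothesis matches the geometric setup of the preceding sections. First I would invoke Theorem~\ref{thm:cat} (Catanese): since $\phi:\pi_1(X)\to\Gamma_{g;\vec m}$ is surjective with finitely generated kernel, there is a surjective holomorphic map $f:X\to Y$ onto a smooth projective curve $Y$ of genus $g$ having exactly $n$ multiple fibres of multiplicities $m_1,\ldots,m_n$, and $\phi$ is (up to the identifications of Proposition~\ref{prop:orb}) the induced map $\pi_1(X)\to\pi_1^{orb}(Y,\sum m_ip_i)=\Gamma_{g;\vec m}$. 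In particular $\ker\phi$ is canonically identified with the group called $\ker(\phi)$ in Proposition~\ref{prop:orb}, and the outer action $\rho:\Gamma_{g;\vec m}\to\Out(\ker\phi)$ coming from the extension is exactly the nonabelian monodromy action studied there.

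The one genuine gap between the hypotheses of the present theorem and those of Proposition~\ref{prop:phi} is flatness: Proposition~\ref{prop:phi} was stated ``assuming flatness of $f$'' so as to be able to delete the codimension-$\ge 2$ bad locus $Z$. But here $Y$ is a smooth curve, so $f:X\to Y$ is automatically flat --- every dominant morphism from a reduced (indeed smooth) scheme to a smooth curve is flat, since flatness over a Dedekind base is equivalent to torsion-freeness, and $X$ is integral. Hence the hypotheses of Proposition~\ref{prop:phi} are met verbatim, with $\pi_1^{orb}(Y)=\Gamma_{g;\vec m}$, and for any finite index $H\subset\ker\phi$ the partial representation $\sigma^H$ of $\Out(\ker\phi)$ on $H^{ab}_\Q$ is the one appearing there. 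Applying Proposition~\ref{prop:phi} directly gives that the identity component of the Zariski closure of the image of $\sigma^H\circ\rho$ is semisimple of symplectic Hermitian type, which is the assertion.

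I do not expect a serious obstacle; the content of Theorem~\ref{thm:main2} is entirely carried by Theorem~\ref{thm:cat} and Proposition~\ref{prop:phi}, and the proof is essentially a matter of bookkeeping. The only point that requires a sentence of care is the compatibility of identifications: one must check that Catanese's $f:X\to Y$ realizes $\phi$ as the \emph{same} quotient map $\pi_1(X)\to\Gamma_{g;\vec m}$ up to an automorphism of $\Gamma_{g;\vec m}$ (so that the induced outer action $\rho$ agrees), which follows because the kernel of $\phi$ is intrinsically the maximal normal subgroup with quotient $\Gamma_{g;\vec m}$ arising from a fibration, and an automorphism of $\Gamma_{g;\vec m}$ changes $\rho$ only by an inner-by-automorphism twist that does not affect the Zariski closure of $\sigma^H\circ\rho$ up to conjugacy and isogeny. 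A brief remark that the hyperbolicity hypothesis is not used --- because neither Theorem~\ref{thm:cat} nor Proposition~\ref{prop:phi} requires it --- completes the argument.
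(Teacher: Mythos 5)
Your proposal matches the paper's own argument: the paper's proof of this theorem is exactly the one-line reduction to Theorem~\ref{thm:cat} combined with Proposition~\ref{prop:phi}, and your extra remarks (automatic flatness of a dominant map from a smooth variety to a smooth curve, and the harmlessness of twisting $\rho$ by an automorphism of $\Gamma_{g;\vec m}$) are correct bookkeeping that the paper leaves implicit. No gap.
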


\begin{proof}
  This follows from proposition \ref{prop:phi}, and theorem \ref{thm:cat}.
\end{proof}

\begin{cor}
 The partial representation $\sigma^H\circ \rho $ is semisimple.
\end{cor}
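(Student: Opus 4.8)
The plan is to deduce semisimplicity of the partial representation from the fact, established in Theorem~\ref{thm:main2}, that the identity component $G$ of the Zariski closure of the image of $\sigma^H\circ\rho$ is semisimple. The key point is a standard fact from the theory of algebraic groups: a representation of an abstract group is semisimple (as a representation of that group over $\Q$) precisely when the Zariski closure of its image acts semisimply on the ambient vector space, and a connected semisimple algebraic group in characteristic zero is linearly reductive, so every rational representation of it is completely reducible. Thus the first step is to pass from $G$ to the full Zariski closure $\bar G$, noting that $G=\bar G^\circ$ has finite index in $\bar G$.

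First I would recall that $\sigma^H\circ\rho$ is a representation of the finite-index subgroup $\Dom(\sigma^H\circ\rho)\subseteq\Gamma_{g;\vec m}$ on $V=H^{ab}_\Q$, and let $\bar G\subseteq GL(V)$ be the Zariski closure of its image, with identity component $G=\bar G^\circ$. By Theorem~\ref{thm:main2}, $G$ is semisimple, hence linearly reductive over $\Q$ (characteristic zero), so $V$ is a completely reducible $G$-module. Since $G\trianglelefteq\bar G$ is normal of finite index, a Clifford-theory argument shows $V$ is also completely reducible as a $\bar G$-module: any $\bar G$-submodule is in particular a $G$-submodule and hence has a $G$-stable complement, and one can average the corresponding projector over the finite group $\bar G/G$ to obtain a $\bar G$-stable complement. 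Therefore $\bar G$ acts semisimply on $V$.

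Next, the Zariski density of the image of $\sigma^H\circ\rho$ in $\bar G$ means that a subspace of $V$ is invariant under the abstract group exactly when it is invariant under $\bar G$; the lattice of subrepresentations for the group and for its Zariski closure coincide. Hence complete reducibility of $V$ as a $\bar G$-module transfers verbatim to complete reducibility of $V$ as a module over $\Dom(\sigma^H\circ\rho)$, which is the assertion that the partial representation $\sigma^H\circ\rho$ is semisimple. (Semisimplicity of a partial representation should be understood as semisimplicity of the representation of its domain; this is also insensitive to further shrinking of the domain to a finite-index subgroup, since restriction of a semisimple representation of a group to a finite-index subgroup is again semisimple, by the same averaging argument.)

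I do not expect a serious obstacle here; the only mild subtlety is the bookkeeping in passing between the three groups involved --- the abstract domain, its full Zariski closure $\bar G$, and the identity component $G$ --- and making sure the averaging over the finite quotient $\bar G/G$ is legitimate, which it is because we are in characteristic zero. If one wanted to avoid even this, one could alternatively invoke directly that a finitely generated linear group whose Zariski closure has semisimple identity component is a semisimple representation; but the Clifford-theoretic argument above is self-contained and short.
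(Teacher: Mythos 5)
Your proof is correct and takes essentially the same route as the paper: both deduce the corollary immediately from Theorem~\ref{thm:main2} together with the standard fact that a representation whose Zariski closure has semisimple identity component is completely reducible. The paper compresses that fact into one line via Weyl's unitary trick (the semisimple closure has a compact real form), while you justify it by linear reductivity in characteristic zero plus Clifford averaging over the finite component group and Zariski density --- a sound, slightly more explicit version of the same argument.
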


\begin{proof}
The theorem implies that the  Zariski closure has a compact real form. So the corollary follows from Weyl's unitary trick.
\end{proof}

\begin{rmk}
 The groups $\Gamma_{g,\vec{m}}$ which are not hyperbolic are either finite or abelian. The theorem is vacuous in the finite case.
 In the abelian  case, the theorem implies that it acts through a finite quotient.
\end{rmk}

\begin{prop}\label{prop:even}
Let $X$ be a smooth projective variety. Given an exact sequence 
$$1\to K\to \pi_1(X)\to \Gamma_{g,\vec{m}}\to 1$$
with $K$ finitely generated, $\dim V^{\Gamma_{g;\vec{m}}}$ is even, where $V=
K/[K,K]\otimes \Q$.
\end{prop}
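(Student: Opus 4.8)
The plan is to reduce the statement to a fact about the monodromy representation $\sigma = \sigma^K \circ \rho$ on $V = K^{ab}_\Q$ and then invoke Theorem~\ref{thm:main2} together with the structure of polarized variations of Hodge structure. First I would observe that by Theorem~\ref{thm:cat} the surjection $\phi:\pi_1(X)\to \Gamma_{g,\vec{m}}$ with finitely generated kernel arises from a holomorphic fibration $f:X\to Y$ over a genus-$g$ curve with the prescribed multiple fibres, and that—after passing to the étale covers supplied by Lemma~\ref{lemma:key} applied with $H=K$ (so $G$ trivial, $\chi$ trivial)—the representation $\sigma$ is, up to commensurability, the monodromy of a polarizable variation of Hodge structure $\cH$ of type $\{(-1,0),(0,-1)\}$ on a smooth variety $\tilde Y$ mapping generically finitely to $Y$. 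Concretely, $V\otimes\C$ carries a Hodge decomposition $V^{-1,0}\oplus V^{0,-1}$ compatible with a nondegenerate skew form, i.e. $V$ is (rationally) the $H_1$ of a family of abelian varieties, and the $\Gamma_{g,\vec{m}}$-action is through the monodromy.

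The key point is then that invariants of the monodromy are sub-Hodge structures. Precisely, $V^{\Gamma_{g,\vec{m}}}$ is the space of global invariants of the local system underlying $\cH$ (one must check that passing to the finite-index domain and to $\tilde Y$ does not change the invariant subspace, which holds because $p_*\pi_1(\tilde Y)$ has finite index and a finite-index subgroup of $\Gamma_{g,\vec{m}}$ still has the same $\Q$-vector space of invariants in $V$ once we note the full group already acts semisimply by the Corollary following Theorem~\ref{thm:main2}). By the theorem of the fixed part (Deligne), the invariant subspace $W = V^{\Gamma_{g,\vec{m}}}$ underlies a sub-Hodge structure of $\cH$, hence is itself polarizable of type $\{(-1,0),(0,-1)\}$: that is, $W$ is the rational $H_1$ of an abelian variety. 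But the first homology of an abelian variety has even dimension (its complex Hodge decomposition pairs $W^{-1,0}$ with $W^{0,-1}=\overline{W^{-1,0}}$ via a nondegenerate symplectic form, forcing $\dim_\Q W = 2\dim_\C W^{-1,0}$). Therefore $\dim V^{\Gamma_{g,\vec{m}}}$ is even.

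I expect the main obstacle to be the bookkeeping around commensurability and étale covers: one must be careful that the invariant space $V^{\Gamma_{g,\vec{m}}}$—computed for the genuine orbifold group acting via the a priori only outer action $\rho$—really does coincide with the invariant space of the honest monodromy representation of the VHS on $\tilde Y$. The outer action is upgraded to an honest action only after the base change producing a section of $\tilde X\to\tilde Y$ (as in the proof of Lemma~\ref{lemma:key}), and one needs that this honest action has the same invariants as $\rho$ modulo the ambiguity by inner automorphisms of $K$, which act trivially on $V=K^{ab}_\Q$; hence the outer action on $V$ is already well-defined and agrees with the monodromy action on the nose. Granting this, the Hodge-theoretic input (theorem of the fixed part, parity of $b_1$ of an abelian variety) is immediate, and there is no further calculation to grind through.
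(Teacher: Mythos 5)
Your strategy---realize $V$ as the fibre of a weight $-1$ polarizable variation of Hodge structure and apply the theorem of the fixed part, so that $V^{\Gamma_{g;\vec{m}}}$ carries a Hodge structure of type $\{(-1,0),(0,-1)\}$ and is therefore even-dimensional---is genuinely different from the paper's argument, which instead runs the Hochschild--Serre sequence for $1\to K\to\pi_1(X)\to\Gamma_{g;\vec{m}}\to 1$, proves $H^2(\Gamma_{g;\vec{m}},\Q)\to H^2(\pi_1(X),\Q)$ is injective using the fibre class on a cover $X'\to Y'$, deduces $b_1(X)=2g+\dim H^0(\Gamma_{g;\vec{m}},H^1(K,\Q))$, gets parity from evenness of $b_1$ of a projective manifold, and finally converts coinvariants to invariants by semisimplicity. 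Your route can be made to work, but as written it has two real gaps. First, $K=\ker\phi$ is not a finite-index subgroup of the fundamental group of a fibre, so ``Lemma \ref{lemma:key} with $H=K$'' does not parse: in this setting $f$ has singular and multiple fibres, Lemma \ref{lemma:key} concerns smooth projective families, and what Proposition \ref{prop:orb} actually provides is a surjection $\pi_1(f^{-1}(y_0))\onto\ker\phi$, so that $V$ is only a monodromy-equivariant \emph{quotient} of $H_1(X_{y_0},\Q)$ over $Y-D$. To give the quotient local system with fibre $V$ a VHS structure of type $\{(-1,0),(0,-1)\}$ you need an additional input (Deligne's semisimplicity of polarizable variations over the quasi-projective base $Y-D-Z$, so that a quotient local system again underlies a sub-VHS); you cannot simply declare that $V$ ``is'' the $H_1$ of the fibres.

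Second, the commensurability step is wrong as stated: semisimplicity of the $\Gamma_{g;\vec{m}}$-action does not imply that a finite-index subgroup $\Gamma'$ has the same invariants. For instance, if $\Gamma$ acts on $\Q^2$ through $\Z/2$ by $\mathrm{diag}(1,-1)$, the action is semisimple, $V^{\Gamma'}=\Q^2$ for $\Gamma'$ inside the kernel, while $V^{\Gamma}=\Q$; note the parities differ, which is precisely what your argument must exclude, so proving $\dim V^{\Gamma'}$ even over $\tilde Y$ does not give the proposition. Two repairs are available: (i) avoid shrinking altogether---since inner automorphisms act trivially on $K^{ab}$, the $\Gamma_{g;\vec{m}}$-action on $V$ is an honest representation pulled back to $\pi_1(Y-D-Z)$, so no passage to $\tilde Y$ is needed for the trivial subgroup and trivial character, and the fixed-part theorem applies directly to $V^{\Gamma_{g;\vec{m}}}$; or (ii) if you do pass to a finite cover, show the residual finite group acts on the fixed part by automorphisms of its Hodge structure (equivariance of the pulled-back variation under the covering group), so that its invariants are again a Hodge structure of type $\{(-1,0),(0,-1)\}$. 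With either repair, your final parity step (weight $-1$ forces $\dim_\Q W=2\dim_\C W^{-1,0}$) is correct and yields the proposition.
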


\begin{proof}
Let $\Gamma=\Gamma_{g,\vec{m}}$.
From the Hochschild-Serre spectral sequence, we deduce an exact
sequence
$$0\to H^1(\Gamma,\Q)\to H^1(\pi_1(X),\Q)\to H^0(\Gamma, H^1(K,
\Q))\to H^2(\Gamma,\Q)\to H^2(\pi_1(X),\Q)$$
By  theorem \ref{thm:cat}, the map $\pi_1(X)\to \Gamma$ is
realized by a surjective holomorphic map $f:X\to Y$ to a curve.
We claim that  $H^2(\Gamma,\Q)\to H^2(\pi_1(X),\Q)$ is
injective. This together with lemma \ref{lemma:HGamma}, will imply
that the Betti number $b_1(X)= 2g+ \dim H^0(\Gamma, H^1(K,
\Q))$. It would follow that the dimension of coinvariants $\dim V_{\Gamma} =\dim H^0(\Gamma, H^1(K,
\Q))$ is even. Since $\Gamma$ acts semisimply, this is also the dimension of the space of invariants.

To prove the claim, choose  $\Gamma'\subset\Gamma$ as above. Then $\Gamma'$ is the
(usual) fundamental group of some finite Galois cover $Y'\to Y$. Let
$X'$ be a desingularization of  $X\times_Y Y'$. Since by lemma \ref{lemma:HGamma}, 
$$H^2(\Gamma,\Q)\cong H^2(\Gamma',\Q)\cong H^2(Y',\Q)$$
it suffices to prove to prove that the composite of
$$H^2(\Gamma',\Q)\to H^2(\pi_1(X'),\Q)\to H^2(X',\Q)$$
is injective. But this is clear because the image contains the
fundamental class of the fibre of $X'\to Y'$.

\end{proof}

\begin{rmk}
 The same proof shows that $\dim V_{\Gamma_{g;\vec{m}}}$ is even,  when $X$ is compact K\"ahler.
\end{rmk}

\section{\'Etale covers of general curves}

Let $\Gamma_g$ be the fundamental group of a genus $g$ curve. Fix a
finite index normal subgroup $H\subset \Gamma_g$ with quotient
$G$. This determines a Galois \'etale cover $\tilde C\to C$ of any
curve of genus $g$. Our goal is to compute the special Mumford-Tate
group of
$H^1(\tilde C)$ when $C$ is  {\em very general curve}. This means that
$C$ occurs in the complement of a countable union of proper subvarieties of the moduli
space of curves $M_g(\C)$. Since $G$ acts on $H^1(\tilde C)$, we can decompose
it as sum of isotypic components $\bigoplus_\chi H^1(\tilde C)^\chi$, as $\chi$ runs over (isomorphism classes of) 
irreducible $\Q[G]$-modules. It suffices to compute $SMT(H^1(\tilde C)^\chi)$.

In order to state the main result, we need to recall some terminology from \cite{gllm}.
The space $V=H^1(\tilde C,\Q)$ carries a
$\Q[G]$-valued pairing  given by
$$\langle u,v\rangle = \sum_{g\in G} (u, gv)g$$
where $(,)$ is the usual intersection pairing on $V$.
This is sesquilinear and skew-Hermitian with respect to the involution $g^*=g^{-1}$, i.e. $\langle gu, hv\rangle
=g \langle u,v\rangle h^*$ and $\langle u,v\rangle = -\langle
v,u\rangle^*$ \cite[lemma 3.1]{gllm}. We have
$$\im \tau^H\subseteq \Aut(V, \langle, \rangle)$$
We can break up the group on the right into simpler pieces.
Let $\chi$ be the class of an irreducible $\Q[G]$-module in the
Grothendieck group. The $\chi$-isotypic submodule of $\Q[G]$ is a
subalgebra $A_\chi$ which is a matrix algebra over a division ring
$D_\chi$. The algebra $A_\chi$ is stable under $*$ \cite[lemma 3.2]{gllm}.  Let $L_\chi$ denote the center of $A_\chi$, and let $K_\chi$ the fixed field  for the involution.
 The isotypic component $V^\chi= H^{ab,\chi}_\Q$  becomes a $A_\chi$-submodule which is also stable under the action of $\Stab(r)$. 
The restriction of the pairing $\langle, \rangle$ to $V_\chi$
is $A_\chi$-valued \cite[lemma 3.3]{gllm}.  
The group  $\Aut(V_\chi, \langle, \rangle)$ is  naturally an algebraic group over $K_\chi$,
but we wish to regard it  as an algebraic groups over $\Q$. More
formally, we apply  Weil restriction $\G_{H,\chi}=
Res_{K_\chi/\Q}\Aut(V_\chi, \langle, \rangle)$. We will need to consider
the subgroup $\G_{H,\chi}^1\subset \G_{H,\chi}$ of elements with
reduced norm equal to $1$.
The associated complex group $\G_{H,\chi}(\C)$ is symplectic,
orthogonal or general linear according to whether
$A_\chi\otimes_{K_\chi} \R$ becomes a matrix algebra over $\R,\C$ or
the quaternions. Proofs of this and more can be found in \cite{gllm}.
Let $\tau^{H,\chi}$ denote the representation corresponding to $V^\chi$.
The image of this is  in $\G_{H,\chi}(\Q)$.
We can decompose
$\tau^H= \sum_\chi \tau^{H,\chi}$. 
We say that  the quotient map
$r:\Gamma_g\to G$  is redundant if it
factors as
 $$\Gamma_g\stackrel{r'}{\to} F_g\stackrel{r''}{\to} G$$
where $F_g$ is a free group on $g$ generators,
 $r'$ is a surjection
  and, $r''$  contains a free
  generator in its kernel.  Clearly, there is a redundant homomorphism
  onto $G$ if and only if it is generated by fewer than $g$ elements.

\begin{thm}\label{thm:main3}
  Let $g\ge 3$. Suppose that $r:\Gamma_g\to G$ is a redundant surjective
  homomorphism. Let $C$ be a
  very general curve of genus $g$ and $\tilde C$ is the corresponding
  \'etale $G$-cover. For each irreducible $\Q[G]$-module $\chi$, the special Mumford-Tate group of the
  isotoypic component $SMT(H^1(\tilde C)^\chi)=\ \G_{H,\chi}^1$

\end{thm}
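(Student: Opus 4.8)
The plan is to combine three ingredients: the Hodge-theoretic upper bound coming from Theorem~\ref{thm:main1} (which forces $SMT(H^1(\tilde C)^\chi)$ to sit inside a symplectic Hermitian group compatible with the $\langle,\rangle$-structure, hence inside $\G_{H,\chi}^1$), a Zariski-density lower bound for the image of $\tau^{H,\chi}\circ\rho$ in $\G_{H,\chi}^1$ borrowed from Grunewald--Larsen--Lubotzky--Malestein \cite{gllm}, and Andr\'e's theorem \cite{andre} relating the Mumford--Tate group of a very general fibre to the monodromy group of the ambient variation of Hodge structure. The redundancy hypothesis on $r$ and the bound $g\ge 3$ enter only through the cited arithmeticity/density results of \cite{gllm}, so I would quote those rather than reprove them.

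First I would set up the geometry. Let $M_g$ be the moduli space (or a suitable level cover that carries a universal curve $\cX\to M_g$), so that the $G$-cover construction yields a family $\tilde\cX\to M_g$ whose fibre over $[C]$ is $\tilde C$. Applying the $\chi$-isotypic projector $e\in\Q[G]$ to $R^1(\tilde\cX\to M_g)_*\Q$ produces a polarizable variation of Hodge structure $\cH^\chi$ of type $\{(1,0),(0,1)\}$ on $M_g$ (or its level cover), whose underlying local system has monodromy representation $\tau^{H,\chi}$ composed with the mapping-class-group action — this is exactly the situation of Lemma~\ref{lemma:key} and Theorem~\ref{thm:main1} specialised to the tautological family over moduli. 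Let $\mathbf{M}$ denote the identity component of the Zariski closure of the monodromy image of $\tau^{H,\chi}$ inside $\G_{H,\chi}^1$ (monodromy lands there because it preserves both the $A_\chi$-valued pairing and the relevant reduced-norm-one condition).

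Next comes the density input. By \cite{gllm} (their arithmeticity theorem for generalized Prym representations of redundant covers, valid for $g\ge 3$), the image of $\tau^{H,\chi}$ is Zariski-dense in $\G_{H,\chi}^1$; hence $\mathbf{M}=\G_{H,\chi}^1$. Now invoke Andr\'e's theorem exactly as in the proof of Theorem~\ref{thm:main1}: for $C$ very general, $\mathbf{M}$ is a normal subgroup of $DMT(\cH^\chi_{[C]})=DSMT(H^1(\tilde C)^\chi)$, and conversely $SMT(H^1(\tilde C)^\chi)$ is contained in $\G_{H,\chi}^1$ by Mumford's theorem (Theorem~\ref{thm:mumford}) together with the fact that the Hodge structure on $V^\chi$ is compatible with the $A_\chi$-valued pairing, so the special Mumford--Tate group respects that extra structure. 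Because $\G_{H,\chi}^1$ is (semi)simple and $\mathbf{M}$ is a nontrivial normal subgroup of $DSMT(H^1(\tilde C)^\chi)\subseteq\G_{H,\chi}^1$ that already equals the whole of $\G_{H,\chi}^1$, we get $DSMT(H^1(\tilde C)^\chi)=\G_{H,\chi}^1$; finally, since a variation of Hodge structure of abelian-variety type has $SMT=DSMT$ when there is no extra scalar torus factor — which holds here because $\G_{H,\chi}^1$ is already semisimple — we conclude $SMT(H^1(\tilde C)^\chi)=\G_{H,\chi}^1$, as claimed.

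The main obstacle, and the step I would treat most carefully, is the \emph{matching of structures}: one must check that the $\Q$-algebraic group $\G_{H,\chi}^1$ (defined purely algebraically via Weil restriction from $\Aut(V_\chi,\langle,\rangle)$) is precisely the algebraic group preserving all the Hodge-theoretic data on $V^\chi$, so that the two inclusions $\mathbf{M}\subseteq SMT(H^1(\tilde C)^\chi)\subseteq\G_{H,\chi}^1$ pinch together with no room to spare. Concretely, this means verifying that the polarization form in Mumford's theorem can be taken $A_\chi$-compatible and that the centralizer of $\G_{H,\chi}^1$-action leaves no undetected endomorphisms that could enlarge or shrink the Mumford--Tate group — essentially a bookkeeping argument with the division algebra $D_\chi$, its involution, and the three cases (orthogonal, symplectic, unitary) already tabulated in \cite{gllm}. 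A secondary subtlety is ensuring that ``very general'' in the sense of avoiding a countable union of proper subvarieties of $M_g$ is enough to apply Andr\'e's theorem on the level cover; this is standard but should be stated.
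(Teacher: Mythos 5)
Your proposal is correct and follows essentially the same route as the paper: a universal family over a level cover of moduli processed through Lemma~\ref{lemma:key}, the upper bound $SMT(H^1(\tilde C)^\chi)\subseteq\G_{H,\chi}^1$ (exactly the paper's Lemma~\ref{lemma:mt}, proved via $A_\chi$-equivariance of $h(U(1))$ and the reduced-norm-one computation you flag as the step to check), and the lower bound from the GLLM arithmeticity theorem combined with the inclusion of the connected monodromy group in the Hodge group of a very general fibre. The only cosmetic difference is that you invoke Andr\'e's normality theorem where the paper quotes Milne's result that the connected monodromy group lies in $SMT(\cH_y)$ for very general $y$; either way the two inclusions pinch together as you describe.
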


The main ingredient is the following theorem of Grunewald, Larsen,
Lubotzky, and  Malestein \cite[thm 1.6]{gllm}.

\begin{thm}\label{thm:gllm}
  Suppose that $g\ge 3$ and  $r$ is redundant. Then for any  irreducible
  $\Q[G]$-module $\chi$, $\im \tau^{H,\chi}$ is an
  arithmetic subgroup of $\G_{H,\chi}^1$. 
\end{thm}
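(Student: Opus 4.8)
The plan is to prove the two containments separately: that $\im\tau^{H,\chi}$ sits inside the arithmetic group $\G_{H,\chi}^1(\Z)$, and that it has finite index there. The first is the routine direction. A mapping class preserves the integral lattice $H_1(\tilde C,\Z)$ together with its $\Q[G]$-module structure and the $A_\chi$-valued skew-Hermitian form, so on the $\chi$-isotypic piece it preserves an order in $A_\chi$ and lies in $\G_{H,\chi}(\Z)$; since an isometry of the form has reduced norm one, the image in fact lies in $\G_{H,\chi}^1(\Z)$. Moreover, applying Theorem~\ref{thm:main1} to the universal family over a fine moduli variety $\mathcal M$ covering $M_g$ (whose fundamental group surjects onto a finite-index subgroup of the mapping class group, with $\tau^{H,\chi}\circ\rho$ commensurable to $\tau^{H,\chi}$) shows that the identity component of the Zariski closure of $\im\tau^{H,\chi}$ is semisimple. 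The entire content is therefore to pin this closure down to all of $\G_{H,\chi}^1$ and then to upgrade Zariski density to finite index.

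The key geometric input comes from redundancy. Writing $r=r''\circ r'$ with $r'\colon\Gamma_g\onto F_g$ the standard handle-collapsing retraction and $r''$ killing a free generator, I obtain a nonseparating simple closed curve $c$ on $C$ (the longitude of the collapsed handle) with $r([c])=1\in G$. Hence the $G$-cover is split over $c$: its preimage $p^{-1}(c)=\bigsqcup_{g\in G}c_g$ is a disjoint union of $|G|$ curves permuted simply transitively by $G$. Because $r([c])=1$, inserting copies of $c$ does not change images in $G$, so the Dehn twist $T_c$ fixes $r$ and therefore lies in $\Stab(r)\subseteq\Dom(\tau^{H,\chi})$; it lifts to the commuting product $\prod_{g}T_{c_g}$, and on $H_1(\tilde C,\Q)$ it acts by the transvection
\[
v\ \longmapsto\ v+\textstyle\sum_{g\in G}\langle v,[c_g]\rangle\,[c_g],
\]
that is, a unipotent $A_\chi$-transvection on $V^\chi$ supported on the single $A_\chi$-line through the image of $[c]$. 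Thus $\im\tau^{H,\chi}$ contains a nontrivial unipotent element of the classical group $\G_{H,\chi}^1$.

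I would then manufacture enough such unipotents to force Zariski density. Conjugating $T_c$ by any $\phi\in\Stab(r)$ gives $T_{\phi(c)}$, and $r(\phi(c))=r(c)=1$, so the conjugate is again a transvection of the same shape supported on the line through $[\phi(c)]$. As $\phi$ ranges over $\Stab(r)$ the directions $[\phi(c)]$ span a $\Stab(r)$-submodule of $V^\chi$, and by the irreducibility of the $\Stab(r)$-action on the $\chi$-isotypic component \cite{gllm} these directions $\Q$-span $V^\chi$. A connected semisimple subgroup of $\G_{H,\chi}^1$ generated by $A_\chi$-transvections whose directions span the module must be the full isometry group $\G_{H,\chi}^1$; this can be seen from the classification of irreducible groups generated by transvections (McLaughlin, Zalesskii--Sere\v{z}kin), or by combining the semisimplicity already furnished by Theorem~\ref{thm:main1} with the fact that these transvections lie in no proper parabolic. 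In either case the Zariski closure of $\im\tau^{H,\chi}$ is all of $\G_{H,\chi}^1$.

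The hard part, and the genuine heart of the theorem, is the final upgrade from Zariski density to finite index. Here I would invoke an arithmeticity criterion for subgroups generated by unipotents: a Zariski-dense subgroup of $\G_{H,\chi}^1(\Z)$ that contains, up to finite index, the integral points of two opposite unipotent radicals is itself of finite index. Concretely, strong approximation in the style of Nori and Weisfeiler reduces the finite-index question to a local generation statement at each prime $p$ --- that the reductions of the available transvections generate $\G_{H,\chi}^1(\Z/p)$ --- and this local generation holds once $\G_{H,\chi}^1$ has $\Q$-rank at least two and avoids a few small exceptional types, by results of Tits, Raghunathan and Venkataramana. The hypotheses $g\ge 3$ and redundancy are precisely what guarantee that the transvections constructed above populate opposite horospherical subgroups and that the rank of $\G_{H,\chi}^1$ is large enough for the criterion to apply; in low rank the conclusion can genuinely fail and no such criterion is available. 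This local-to-global, strong-approximation step --- rather than the topology producing the unipotents --- is where the real difficulty and the essential use of the hypotheses reside.
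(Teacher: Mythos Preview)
The paper does not prove this statement at all: Theorem~\ref{thm:gllm} is quoted verbatim as ``the following theorem of Grunewald, Larsen, Lubotzky, and Malestein \cite[thm 1.6]{gllm}'' and is used as a black box input to the proof of Theorem~\ref{thm:main3}. There is no argument in the paper to compare your proposal against; the paper's ``proof'' is the citation.

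That said, your sketch is in the spirit of the actual GLLM argument, but several steps are not quite right as written. First, appealing to Theorem~\ref{thm:main1} for semisimplicity is logically backwards in this context: in the paper Theorem~\ref{thm:gllm} is an external input, independent of the Hodge-theoretic machinery, and in \cite{gllm} itself no variation of Hodge structure appears. Second, your claim that the $\Stab(r)$-action on $V^\chi$ is irreducible is not what \cite{gllm} proves or needs; $V^\chi$ is typically a sum of many copies of the simple $A_\chi$-module, and what one needs is that the transvection directions span $V^\chi$ as an $A_\chi$-module, which is a different (and weaker) statement. Third, and most seriously, your arithmeticity step assumes $\G_{H,\chi}^1$ has $\Q$-rank at least two so that the Raghunathan--Venkataramana type criteria apply, and you assert that $g\ge 3$ plus redundancy forces this. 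That is false: for many $\chi$ the group $\G_{H,\chi}^1$ has $K_\chi$-rank one (e.g.\ unitary groups $SU(p,1)$ arise), and strong approximation/bounded generation arguments do not suffice there. The GLLM paper handles these low-rank cases by a separate and considerably more delicate argument. So the ``hard part'' you identify is genuinely hard, and your proposed route through rank-$\ge 2$ criteria does not cover all cases the theorem asserts.
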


\begin{lemma}\label{lemma:mt}
Let $H, G, r, \chi$ be as above, but with $r$ not necessarily redundant.
Let $C$ be a smooth projective curve of genus $g$ and $\tilde C$ the
corresponding unramified $G$-cover. The
$\chi$-isotypic component of $H_1(\tilde C,\Q)=H^1(\tilde C, \Q)^\vee$ is a sub Hodge structure
with special Mumford-Tate group contained in $\G_{H,\chi}^1$.
\end{lemma}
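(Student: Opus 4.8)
The plan is to reduce the assertion for an arbitrary curve of genus $g$ to the generic case, where Theorem~\ref{thm:gllm} (combined with Andr\'e's theorem, as in the proof of Theorem~\ref{thm:main1}) gives us control of the Mumford--Tate group. First I would recall that the datum $H \subset \Gamma_g$ determines a local system on the moduli space $M_g$ (or rather on a level cover of it on which the $G$-cover is defined): over a curve $C$ we take $R^1 p_* \Q$ for the universal $G$-cover $\tilde C \to C$, and its $\chi$-isotypic piece $\cH^\chi$ is a polarized variation of Hodge structure of type $\{(-1,0),(0,-1)\}$ (as already observed in the proof of Lemma~\ref{lemma:key}), whose monodromy representation is $\tau^{H,\chi}$. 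The point of the lemma is that the special Mumford--Tate group of a \emph{single} fibre $H_1(\tilde C,\Q)^\chi$ cannot be larger than $\G_{H,\chi}^1$, regardless of how special $C$ is.

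The key step is the containment $SMT(\cH^\chi_C) \subseteq \G_{H,\chi}^1$ for every $C$, which I would deduce from two facts. The first is that $\G_{H,\chi}^1$ is, by its very construction, the $\Q$-group of automorphisms of the vector space $V^\chi$ preserving the extra structure (the $A_\chi$-module structure and the $A_\chi$-valued skew-Hermitian pairing $\langle,\rangle$) together with the reduced-norm-one condition; since the Hodge structure on $H_1(\tilde C,\Q)^\chi$ is polarized by (a suitable multiple of) the intersection form and is compatible with the $G$-action, its Hodge cocharacter $h(U(1))$ lands in $\G_{H,\chi}^1(\R)$. Because $SMT$ is by definition the \emph{smallest} $\Q$-algebraic subgroup whose real points contain $h(U(1))$, we get $SMT(H_1(\tilde C,\Q)^\chi) \subseteq \G_{H,\chi}^1$. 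Concretely: the $A_\chi$-action is defined over $\Q$ and commutes with $h$, so it commutes with $SMT$; the pairing is fixed by $h$, hence by $SMT$; and one checks the reduced-norm condition is automatic for elements of $SMT$ since $SMT \subseteq SL$ up to the usual identity-component subtleties, which is exactly why we pass to $SMT$ rather than $MT$.

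The main obstacle will be bookkeeping the algebra structure correctly: one must verify that the $\Q[G]$-module structure on $H_1(\tilde C,\Q)$ is a morphism of Hodge structures (so that $A_\chi$ centralizes $h$ and hence $SMT$), that the $*$-involution is compatible with the polarization, and that the "reduced norm $=1$" cutting-out of $\G_{H,\chi}^1$ inside the Weil restriction is respected — all of which is implicit in \cite[\S3]{gllm} but needs to be stated over $\Q$ and matched with the Hodge-theoretic normalization. None of this is deep; it is a matter of assembling \cite[lemmas 3.1--3.3]{gllm} with the definition of $SMT$. I would write the proof as: (i) recall $\cH^\chi$ is a polarized VHS with $G$-action; (ii) observe $h(U(1)) \subset \G_{H,\chi}^1(\R)$ from the compatibilities just listed; (iii) conclude by minimality of $SMT$. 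For completeness I would note that the redundancy hypothesis is not needed here — it is only in Theorem~\ref{thm:main3}, where the \emph{reverse} inclusion (equality for very general $C$) requires Theorem~\ref{thm:gllm}.
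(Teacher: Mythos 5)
Your overall strategy coincides with the paper's: observe that $G$ acts holomorphically, hence by automorphisms of the polarized Hodge structure, so the isotypic piece $M\subset H_1(\tilde C,\Q)$ is a polarized sub Hodge structure; show $h(U(1))$ lands in the real points of $\G_{H,\chi}^1$ using compatibility with the $A_\chi$-action and the pairing; and conclude by minimality of the special Mumford--Tate group. (Your opening framing about ``reducing to the very general case'' via Theorem \ref{thm:gllm} is both unnecessary and would be circular, since the generic-case equality of Theorem \ref{thm:main3} is deduced \emph{from} this lemma; fortunately you do not actually use it in the key step.)

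The genuine gap is your treatment of the reduced-norm condition. You claim it is ``automatic for elements of $SMT$ since $SMT\subseteq SL$.'' This fails precisely in the unitary case, which is one of the three types that occur: when $\G_{H,\chi}(\R)$ is of type $U(p,q)$, every element $g$ of the full group already has determinant $1$ on the underlying $\Q$-vector space (its $L_\chi$-determinant $z$ satisfies $zz^{*}=1$, so the $\Q$-determinant, the norm $N_{L_\chi/\Q}(z)$, equals $1$), whereas $\G_{H,\chi}^1$ is the strictly smaller $SU$-type subgroup. So the condition $SMT\subseteq SL$ gives no information, and your argument only yields $SMT\subseteq \G_{H,\chi}$, not $\subseteq\G_{H,\chi}^1$; nor can one appeal to semisimplicity, since $SMT$ may be a torus (CM fibres). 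The paper closes exactly this point by verifying the condition on $h(U(1))$ itself: after extending scalars to $\C$, the reduced norm of $h(\lambda)$ is $\prod\lambda^{(q-p)\dim M^{pq}}$, which equals $1$ because the type is $\{(-1,0),(0,-1)\}$ and $\dim M^{-1,0}=\dim M^{0,-1}$ (the pieces are complex conjugate, and the $A_\chi$-action respects the decomposition). With that computation in place, minimality of $SMT$ applied to the $\Q$-group $\G_{H,\chi}^1$ finishes the proof, as you indicate.
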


\begin{proof}
  Since $G$ acts holomorphically on $\tilde C$, it preserves the
  canonically polarized Hodge
  structure $H^1(\tilde C)$. Therefore the $\chi$-isotypic component $M\subset H_1(\tilde
  C,\Q)$ is a polarized sub Hodge structure.  Let $K=K_\chi$, $A=A_\chi$,
  $\mathcal{G}=\Aut(M,\langle,\rangle)$ and $\mathcal{G}^1\subset
  \mathcal{G}$ subgroup of elements with reduced norm $1$. Then
$\G_{H,\chi}^1=Res_{K/\Q}\mathcal{G}^1$. 
To prove the lemma, it suffices to
  show that the image of  $h:U(1)\to GL(M_\R)$  lies
  $Res_{K/\Q}\mathcal{G}^1(\R)=\mathcal{G}^1(K\otimes \R)$. Since as noted $G$ acts by
  automorphisms of the polarized  Hodge structure, it follows that the image of
  $h:U(1)\to GL(M_\R)$  lies in $\Aut_G(M_\R,\langle,
  \rangle)=\Aut_A(M_\R,\langle, \rangle)$.  This
  almost does it but it remains to check the reduced norm condition. After
  extending scalars to $\C$, we see that the reduced norm of
  $h(\lambda)$ equals
$$\prod \lambda^{(q-p)\dim M^{pq}}=1$$

\end{proof}

\begin{proof}[Proof of theorem \ref{thm:main3}]
  Let $Y=M_{g}[n]$ be the moduli space of curves of genus $g$ with level
  $n\ge 3$ structure \cite{mf}. This is a fine moduli space, so it
  carries a universal curve $X\to Y$. We can identify $\pi_1(Y)$ with
  congruence subgroup $\ker[Mod_g\to Sp_{2g}(\Z/n\Z)]$ of the mapping class group $Mod_g$.
Lemma~\ref{lemma:key} gives a
  surjective map $p:\tilde Y\to Y$ such that $\pi_1(\tilde Y)$ has
  finite index in $\pi_1(Y)$ and such that $\tau^{H,\chi}\circ p_*$ comes
  from a polarizable variation of Hodge structure $\cH$.  
Lemma~\ref{lemma:mt} gives an inclusion $SMT(\cH_y)\subseteq
\G^1(\Q)$ for any $y\in \tilde Y$. For very general $y\in
  \tilde Y$,  the identity component $Z$ of the Zariski closure of the monodromy
  group of $\cH$ lies in $SMT(\cH_y)$ \cite[thm 6.19]{milne}.
By theorem~\ref{thm:gllm}, we
  have $Z=\G^1(\Q)$.  Thus we obtain the reverse inclusion
  $\G^1(\Q) \subseteq SMT(\cH_y)$ for very general $y$.
\end{proof}

When $G=\Z/2\Z$, we recover the main result of \cite{bp} that
the special Mumford-Tate group of a very general Prym variety is the
full symplectic group. 

We record the following corollary of the proof for later use.

\begin{cor}\label{cor:mt}
Assume $g\ge 3$ and that $H\subset \Gamma_g$ is a finite index normal
subgroup such that $\Gamma_g/H$ is generated by fewer than $g$ elements.
Then  the identity component of the Zariski closure of $\im \tau^{H}$ is $\prod_\chi \G_{H,\chi}^1$.
\end{cor}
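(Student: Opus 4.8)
The plan is to deduce Corollary~\ref{cor:mt} directly from the proof of Theorem~\ref{thm:main3}, using the fact that $\tau^H = \bigoplus_\chi \tau^{H,\chi}$. First I would recall that by the last clause of the definition preceding Theorem~\ref{thm:main3}, the hypothesis ``$\Gamma_g/H$ is generated by fewer than $g$ elements'' is exactly equivalent to the redundancy of the quotient map $r:\Gamma_g\to G$, so Theorem~\ref{thm:main3} (and Theorem~\ref{thm:gllm}) applies. The point is that we are now fixing a very general curve $C$ of genus $g$, but rather than looking at a single fibre we want the Zariski closure of the monodromy image $\im\tau^H$ acting on $H^1$ of the cover, i.e.\ the image of the arithmetic subgroup sitting inside the mapping class group.

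The key steps, in order, are as follows. First, observe that the Zariski closure of $\im\tau^{H,\chi}$ is computed in the proof of Theorem~\ref{thm:main3}: there it is identified with the identity component $Z$ of the Zariski closure of the monodromy group of the variation of Hodge structure $\cH$ attached to the $\chi$-isotypic component over $\tilde Y$, and that $Z$ is shown to equal $\G_{H,\chi}^1$. (Alternatively, one can invoke Theorem~\ref{thm:gllm} directly: $\im\tau^{H,\chi}$ is an arithmetic, hence Zariski-dense, subgroup of $\G_{H,\chi}^1$, and by Borel density the Zariski closure of an arithmetic subgroup of a semisimple group without compact factors is the whole group.) Second, since $\tau^H$ decomposes as the direct sum $\bigoplus_\chi \tau^{H,\chi}$, its image lies in $\prod_\chi \G_{H,\chi}^1$ and the Zariski closure of $\im\tau^H$ is contained in $\prod_\chi \overline{\im\tau^{H,\chi}}^{\,Zar}$. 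Third, one must check the reverse containment: that the closure of the diagonal image $\im\tau^H$ is all of $\prod_\chi \G_{H,\chi}^1$, not merely a proper ``diagonal'' subgroup. For this I would use Goursat's lemma together with the fact that the factors $\G_{H,\chi}^1$ are pairwise non-isogenous as the $\chi$ vary (they are built from genuinely different isotypic pieces, and more concretely $\tau^H$ is faithful on a finite-index subgroup after Koberda's result, while the arithmeticity statement of \cite{gllm} is already phrased for the product); alternatively, invoke the relevant statement from \cite[thm 1.6]{gllm} or its proof, which gives arithmeticity of $\im\tau^H$ in $\prod_\chi\G^1_{H,\chi}$ simultaneously.

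I expect the main obstacle to be precisely the third step: ruling out that the closure of $\im\tau^H$ is a proper subgroup projecting onto each factor via some hidden isogenies between the $\G_{H,\chi}^1$. The cleanest resolution is to appeal to the full strength of \cite[thm 1.6]{gllm}, which as stated gives arithmeticity of $\im\tau^H$ inside $\prod_\chi\G^1_{H,\chi}$ (not just factor by factor), so that Borel density applied to the product semisimple group yields Zariski-density immediately and Corollary~\ref{cor:mt} follows. If one wishes to argue only from Theorem~\ref{thm:gllm} as stated (one $\chi$ at a time), then one needs the supplementary observation that distinct irreducible $\Q[G]$-modules $\chi$ give non-isomorphic local systems on $Y$ — equivalently, the centralizer algebra of $G$ acting on $H^1(\tilde X_y)$ is $\prod_\chi A_\chi$ — so that no Goursat-type identification can occur, and then the product of the individual closures is forced.
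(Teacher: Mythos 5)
Your first two steps are exactly the paper's (implicit) argument: the paper offers no separate proof of Corollary~\ref{cor:mt}, recording it as a ``corollary of the proof'' of Theorem~\ref{thm:main3}, i.e.\ per-$\chi$ arithmeticity (Theorem~\ref{thm:gllm}) giving that the identity component of the Zariski closure of $\im\tau^{H,\chi}$ is $\G_{H,\chi}^1$, together with the identification of the hypothesis ``$\Gamma_g/H$ generated by fewer than $g$ elements'' with redundancy. So up to that point you are on the same route as the paper, and you have correctly isolated the only point that requires additional thought, namely passing from density in each factor to density in the product $\prod_\chi\G_{H,\chi}^1$ --- a point on which the paper is silent.

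Where your write-up falls short is in how you propose to close that third step. Your primary resolution is to invoke \cite[thm 1.6]{gllm} ``as stated for the product''; but the paper quotes that theorem only one $\chi$ at a time (Theorem~\ref{thm:gllm}: $\im\tau^{H,\chi}$ is arithmetic in $\G_{H,\chi}^1$), so you must actually verify that the product-level arithmeticity of $\im\tau^{H}$ in $\prod_\chi\G^1_{H,\chi}$ is proved there --- it is a genuinely stronger assertion, not a formal consequence. Your fallback via Goursat is not valid as argued: the fact that distinct $\chi$ give non-isomorphic isotypic local systems (equivalently, that the commutant of $G$ on $H^1$ splits as a product over $\chi$) does not preclude the identity component of the closure from being the graph of an isogeny between two abstractly isomorphic factors $\G^1_{H,\chi_1}\cong\G^1_{H,\chi_2}$; what must be excluded is an algebraic isogeny intertwining the two composite homomorphisms from (a finite-index subgroup of) the mapping class group, and neither module-theoretic distinctness nor Koberda's faithfulness result gives this. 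The factors $\G^1_{H,\chi}$ can perfectly well be pairwise isomorphic (e.g.\ for several irreducible $\Q[G]$-modules of the same type and dimension), so ``pairwise non-isogenous'' cannot be assumed. To make the step rigorous you need either the product form of the GLLM theorem (if it is indeed available in \cite{gllm}) or an independent argument (for instance a superrigidity or Hodge-theoretic independence argument ruling out a correlation between two factors). As it stands, your third step is a gap --- though, to be fair, it is a gap the paper itself does not address.
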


\section{Examples}

\subsection{Families of abelian varieties}
Theorems \ref{thm:main1} and \ref{thm:main2} give strong restrictions
on the representations $\tau\circ \rho$ and $\sigma\circ\rho$
associated to  fundamental
groups of varieties. We now want  to understand which representations can
actually arise in this way. To simplify the
 statement of the proposition below, let us say that a group $\Gamma$ occurs in
theorem \ref{thm:main1} (respectively theorem \ref{thm:main2}) if 
 it is isomorphic  to the image of $\tau\circ
\rho$ (respectively $\sigma\circ\rho$) for an  example
satisfying the conditions of the theorem.

\begin{prop}\label{prop:conv}
Let $\Gamma_1$ be an arithmetic subgroup of a  special
Mumford-Tate group $G$ of a polarized Hodge structure of type $\{(-1,0),
(0,-1)\}$. Then all but  finitely many finite index subgroups 
$\Gamma\subset \Gamma_1$ occur in theorem \ref{thm:main1}. If none of the irreducible factors of the symmetric space
$G(\R)^0/K$ are the one ball, two ball, or genus $2$
   Siegel upper half plane, then almost all finite index
   $\Gamma\subset \Gamma_1$ occur in theorem \ref{thm:main2}. Certain
   lattices in $Sl_2(\R)$ occur in theorem \ref{thm:main2}. 
\end{prop}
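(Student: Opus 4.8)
The plan is to manufacture all the required examples from the Shimura datum attached to $G$, realizing $\Gamma$ as the monodromy of a family of abelian varieties. By the proof of Theorem~\ref{thm:mumford}, the hypothesis that $G=SMT(H)$ for a polarized Hodge structure of type $\{(-1,0),(0,-1)\}$ supplies a homomorphism $h\colon U(1)\to G(\R)$ whose conjugacy class is the Hermitian symmetric domain $D=G(\R)^{0}/K$, together with a totally geodesic holomorphic embedding $D\hookrightarrow \mathfrak{H}_{g}$ into a Siegel upper half space coming from the symplectic Hermitian structure (the $(H_2)$ condition of \cite{satake}). Pulling back the tautological data gives a $G(\R)^{0}$-equivariant polarized variation of Hodge structure of type $\{(-1,0),(0,-1)\}$ on $D$, that is, a $G(\R)^{0}$-equivariant family of polarized abelian varieties $\cA\to D$. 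Every example below is obtained by descending $\cA$ along a quotient $\Gamma\backslash D$.

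First I would dispatch Theorem~\ref{thm:main1}, which is the easy assertion since there the base may be an arbitrary smooth quasiprojective variety. For a neat finite index $\Gamma\subset\Gamma_{1}$ the quotient $S_{\Gamma}=\Gamma\backslash D$ is a smooth quasiprojective variety by Baily--Borel, and $\cA$ descends to a smooth projective morphism $\cA_{\Gamma}\to S_{\Gamma}$ with abelian fibres and relative polarization $\omega$. The associated $\tau\circ\rho$ is exactly the monodromy of this variation, whose image is the image of $\pi_{1}(S_{\Gamma})=\Gamma$ in $GL(H_{1}(\cA_{\Gamma,y}))$, namely $\Gamma$ itself. Since a congruence subgroup of level $\ge 3$ is neat (Borel) and neatness is inherited by subgroups, every sufficiently deep $\Gamma$ occurs, which is the sense of ``all but finitely many''; the remaining non-neat $\Gamma$ are the exceptions.

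The real work is Theorem~\ref{thm:main2}, where the base is forced to be a curve, so the family must live over a smooth projective curve $Y$ with projective total space. The decisive constraint is that a degenerating family of abelian varieties has infinite unipotent local monodromy, so degenerate fibres can never be the finite-order multiple fibres produced by the orbifold formalism of Section 3; hence one must exhibit a complete curve $Y$ lying in the interior $S_{\Gamma}$, over which $\cA_{\Gamma}$ stays smooth, with $\pi_{1}(Y)\twoheadrightarrow\Gamma$. When $\dim S_{\Gamma}\ge 2$ I would take $Y$ to be a generic complete intersection curve: the Lefschetz theorem for the fundamental group of a quasiprojective variety \cite[p 153]{gm} gives surjectivity onto $\pi_{1}(S_{\Gamma})=\Gamma$, and by choosing the polarization and a deep enough level one arranges that $Y$ misses the boundary of a compactification $\overline{S_{\Gamma}}$. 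Restricting $\cA_{\Gamma}$ to $Y$ yields $X=\cA_{\Gamma}|_{Y}\to Y$ with finitely generated kernel $\ker\phi=\Z^{2d}$ and $\sigma\circ\rho$ equal to the full monodromy $\Gamma$, so $\Gamma$ occurs via the correspondence between fibrations over curves and orbifold fundamental groups behind Theorem~\ref{thm:cat}. The main obstacle, and the source of the three exclusions, is precisely this boundary control: for the one ball, two ball, and genus $2$ Siegel upper half plane the domain is too small to furnish a complete interior curve carrying the full arithmetic monodromy, and it is exactly the absence of these factors that lets the generic-complete-intersection argument succeed for almost all $\Gamma$.

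Finally, the one ball is rescued for suitable lattices in $SL_{2}(\R)$. When $\Gamma$ is a cocompact arithmetic lattice coming from the norm-one units of a quaternion division algebra over a totally real field that is split at exactly one archimedean place, the associated Shimura curve $S_{\Gamma}$ is already a smooth projective curve with no cusps, and the family $\cA_{\Gamma}\to S_{\Gamma}$ of abelian varieties with quaternionic multiplication is smooth over the complete base $Y=S_{\Gamma}$. Taking $X=\cA_{\Gamma}$ gives an example with $\vec m$ empty whose representation $\sigma\circ\rho$ has image the given lattice, so these lattices occur in Theorem~\ref{thm:main2}. I expect the genuinely delicate points to be the level and neatness bookkeeping needed to turn a monodromy representation into an honest family, and, for Theorem~\ref{thm:main2}, the construction of interior complete curves with surjective monodromy outside the three excluded low-dimensional domains.
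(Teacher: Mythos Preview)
Your plan is essentially the paper's own argument: construct the Shimura variety $S=\Gamma\backslash D$ carrying a universal abelian scheme for Theorem~\ref{thm:main1}, slice by a generic complete-intersection curve $C\subset S$ with $\pi_1(C)\twoheadrightarrow\Gamma$ for Theorem~\ref{thm:main2}, and handle $SL_2(\R)$ via compact quaternionic Shimura curves (the paper uses an indefinite quaternion division algebra over $\Q$, but your totally-real-field variant works the same way). One small correction: what makes the generic complete-intersection curve avoid the boundary is not the choice of level or polarization but the codimension of the Baily--Borel boundary $\bar S\setminus S$, which is $\ge 3$ exactly when none of the three listed factors occur (this is where the paper invokes \cite{toledo} and \cite{wk}); the level only enters to ensure $\Gamma$ is torsion free.
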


Before we explain the examples, we briefly recall the notion of  a Shimura variety of Hodge type  following
the  original viewpoint of Mumford \cite{mumford, mumford2}. Let $G$
be the  special Mumford-Tate group of an abelian variety, and
$\Gamma\subset G$ an arithmetic subgroup. Then, roughly
speaking,  the Shimura variety $S$ parameterizes  the set of all abelian varieties having
Mumford-Tate group finer than or equal to   $G$ such that $\Gamma$
fixes the lattice. Here ``finer'' should be
understood as ``is contained in''. To be precise, let us fix a
polarized Hodge structure $L=H_\Z$ of type $\{(-1,0), (0,-1)\}$ together
with a polarization $\psi$. Let
$G =SMT(H)$ along with the homomorphisms 
$h:U(1)\to G(\R)$ and  $\rho:G\to Sp(H_\Q,\psi)$ that come with it.
Given this,  $K = \{g\in G(\R)\mid \rho(g) h = h\rho(g)\}$ is a
maximal compact subgroup of $G(\R)$ such that $\tilde S=G(\R)/K$ is Hermitian
symmetric domain which embeds into $\mathbb{H}_q=Sp(H_\R,\psi)/\{\text{centralizer
    of $h$}\}$. The space $\mathbb{H}_q$  can identified with the Siegel
  upper half plane of genus $q=\dim H/2$. Thus  $\tilde S$ carries a holomorphic family of
  abelian varieties $\mathcal{U}\to \tilde S$ given by pulling back the universal family from
  $\mathbb{H}_q$. Given an arithmetic subgroup $\Gamma_1\subset
  G(\Q)$, we can choose a finite index torsion free  subgroup
  $\Gamma\subset\Gamma_1$ stabilizing $L$.
Let $S=\Gamma\backslash \tilde S$.
 Then we can construct a family of Abelian
  varieties $\Gamma\backslash \mathcal{U}\to S$.
We note that $S$ is quasiprojective,  with a minimal  projective
compactification $\bar S$ constructed by Baily-Borel \cite{bb}. The following
 is almost immediate from the construction.

 \begin{lemma}
The  given action of
   $\Gamma$ on $L$ is the monodromy of $\Gamma\backslash
   \mathcal{U}\to S$ on the first homology of the fibre. The fundamental group $\pi_1(\Gamma\backslash
   \mathcal{U})=L\rtimes \Gamma$.
 \end{lemma}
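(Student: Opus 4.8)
The plan is to unwind the Baily--Borel and Mumford constructions recalled just before the lemma, and check that the family $\Gamma\backslash \mathcal{U}\to S$ was literally built to have the advertised monodromy. First I would recall that $\tilde S = G(\R)/K$ carries the pulled-back universal abelian scheme $\mathcal{U}\to\tilde S$ from the Siegel space $\mathbb{H}_q$, whose fibre over a point $\tilde s$ is the torus $H_\R/L$ with complex structure $h_{\tilde s}$; since $\tilde S$ is contractible (it is a bounded symmetric domain), the underlying $C^\infty$ family is trivial, the local system $R_1 p_* \Z$ is constant with fibre $L$, and there is a canonical, $\Gamma$-equivariant trivialization identifying each fibre's $H_1$ with $L$. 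Passing to the quotient $S=\Gamma\backslash\tilde S$, the family $\Gamma\backslash\mathcal{U}\to S$ acquires monodromy: a loop in $S$ is represented by an element $\gamma\in\Gamma = \pi_1(S)$ (here one uses that $\tilde S$ is simply connected so $\pi_1(S)=\Gamma$), and the associated deck transformation of $\tilde S$ acts on the constant local system $L$ exactly through the inclusion $\Gamma\hookrightarrow GL(L)$ we started with. That gives the first assertion: the $\Gamma$-action on $L$ is the monodromy of $\Gamma\backslash\mathcal{U}\to S$ on $H_1$ of the fibre.

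For the second assertion, about $\pi_1(\Gamma\backslash\mathcal{U})$, I would apply the long exact homotopy sequence to the fibration $\Gamma\backslash\mathcal{U}\to S$ with fibre an abelian variety $A = H_\R/L$. One gets
\[
\pi_2(S)\to \pi_1(A)\to \pi_1(\Gamma\backslash\mathcal{U})\to \pi_1(S)\to 1,
\]
with $\pi_1(A)=L$ and $\pi_1(S)=\Gamma$. Since $S=\Gamma\backslash\tilde S$ with $\tilde S$ a bounded symmetric domain (in particular contractible), $\pi_2(S)=0$, so the sequence reduces to a short exact sequence $1\to L\to\pi_1(\Gamma\backslash\mathcal{U})\to\Gamma\to 1$. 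To see this sequence splits as a semidirect product with the given $\Gamma$-action, I would exhibit an explicit section: the zero section of the abelian scheme $\Gamma\backslash\mathcal{U}\to S$ gives a continuous section $S\to\Gamma\backslash\mathcal{U}$, hence a splitting $\Gamma\to\pi_1(\Gamma\backslash\mathcal{U})$ of $\pi_1$. The conjugation action of this copy of $\Gamma$ on $L=\pi_1$ of the fibre is, by Corollary~\ref{cor:rho} (or directly, since the universal cover of $\Gamma\backslash\mathcal{U}$ is $\tilde S\times H_\R$ with $\Gamma$ acting diagonally via its linear action on $H_\R\supset L$ and deck action on $\tilde S$), precisely the monodromy action computed in the first part, i.e. the original embedding $\Gamma\subset GL(L)$. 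Hence $\pi_1(\Gamma\backslash\mathcal{U})=L\rtimes\Gamma$.

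The one genuinely delicate point is the appeal to $\pi_2(S)=0$ and $\pi_1(S)=\Gamma$: this requires knowing that $\tilde S$ is contractible and that $\Gamma$ acts freely on it, which is why we chose $\Gamma$ torsion free in the construction. Everything else is a formal consequence of the fibration having a section (the zero section) together with the functoriality of holonomy/monodromy; no computation beyond the homotopy exact sequence is needed. I would therefore present the argument as: (i) contractibility of $\tilde S$ and freeness of $\Gamma$ give $S=K(\Gamma,1)$-type control on low homotopy; (ii) the zero section splits the homotopy sequence; (iii) identify the conjugation action via the diagonal $\Gamma$-action on the universal cover $\tilde S\times H_\R$, which simultaneously yields the monodromy statement.
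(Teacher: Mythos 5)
Your proof is correct and is essentially the argument the paper has in mind: the paper offers no written proof beyond calling the lemma ``almost immediate from the construction,'' and your unwinding — $\Gamma\backslash\mathcal{U}$ has contractible universal cover $\tilde S\times H_\R$ with deck group $L\rtimes\Gamma$ (equivalently, the homotopy exact sequence split by the zero section, with conjugation identified via Corollary~\ref{cor:rho}), and the local system $R_1\Z$ descends from the constant system $L$ on $\tilde S$ with its $\Gamma$-equivariant structure — is exactly that construction spelled out. The points you flag (contractibility of $\tilde S$, freeness of the torsion-free $\Gamma$-action, $\Gamma$-invariance of the zero section) are indeed the only things to check, so no gap remains.
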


As observed in \cite{toledo}, if
 the boundary of $\bar S$ has codimension at least three, we can apply
 a suitable weak Lefschetz theorem \cite[p 153]{gm} to show that there is a  smooth projective surface
 $Z\subset S$ with $\pi_1(Z)\cong \pi_1(S)$, and a smooth projective curve $C\subset S$
 with $\pi_1(C)\twoheadrightarrow \pi_1(S)$. By restricting $\Gamma\backslash
 \mathcal{U}$ to these varieties, and using standard facts about the
 structure of the boundary \cite[remark 2]{toledo}, \cite[thm
 4.13]{wk}, we deduce:

 \begin{lemma}
   With the notation  as above, and suppose that none of the irreducible
   factors of $\tilde S$ are the one ball, two ball, or genus $2$
   Siegel upper half plane, then $L\rtimes \Gamma\in
   \mathcal{P}$. Furthermore, there exists a fibered group $L\rtimes \Gamma_g\in
   \mathcal{P}$ where $\Gamma_g$ acts on $L$ via a surjective
   homomorphism $\Gamma_g\twoheadrightarrow\Gamma$. (For the last
   statement, it is only necessary to exclude the one ball.)
 \end{lemma}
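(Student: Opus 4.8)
The plan is to construct, for a Hermitian symmetric domain $\tilde S$ of the allowed type, a locally symmetric quotient $S = \Gamma\backslash\tilde S$ whose Baily–Borel compactification $\bar S$ has boundary of codimension at least three, so that the weak Lefschetz machinery of \cite[p 153]{gm} applies. First I would recall that by Baily–Borel \cite{bb} the boundary $\bar S\setminus S$ is stratified by lower-dimensional locally symmetric varieties attached to the rational boundary components (equivalently, to the proper parabolic $\Q$-subgroups of $G$), and the codimension of a boundary stratum is controlled by the dimension of the corresponding boundary component versus $\dim\tilde S$. The key arithmetic input is that for the irreducible factors of $G$ that are \emph{not} the one ball, two ball, or genus $2$ Siegel space, the maximal rational boundary components have codimension at least $3$ in each factor (the excluded cases are exactly the low-dimensional ones where a boundary point or boundary curve has codimension $1$ or $2$); I would cite \cite[thm 4.13]{wk} and \cite[remark 2]{toledo} for the precise structure of these boundary strata and for the fact that the family $\Gamma\backslash\mathcal{U}$ extends in a controlled way near the boundary.

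Second I would apply the Goresky–MacPherson weak Lefschetz theorem \cite[p 153]{gm}: taking $Z\subset S$ a smooth surface cut out by generic hyperplane sections (in a projective embedding of $\bar S$) avoiding the boundary, the codimension $\ge 3$ hypothesis forces $\pi_1(Z)\xrightarrow{\sim}\pi_1(S)$, and taking $C\subset Z$ a generic smooth curve section gives $\pi_1(C)\twoheadrightarrow \pi_1(Z)\cong\pi_1(S)$. Restricting the family $\Gamma\backslash\mathcal{U}\to S$ to $Z$ gives a smooth projective family of abelian varieties over a smooth projective surface, so by the first displayed lemma above $\pi_1(\Gamma\backslash\mathcal{U}|_Z) = L\rtimes\Gamma$, proving $L\rtimes\Gamma\in\mathcal{P}$.

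Third, for the fibered statement: restricting instead to the curve $C$, the composite $\pi_1(C)\twoheadrightarrow \pi_1(S) = \Gamma$ realizes $\Gamma$ as a quotient of a surface group $\Gamma_g$ (where $g$ is the genus of $C$), and the pulled-back family of abelian varieties over $C$ has total space $W$ with $\pi_1(W) = L\rtimes\pi_1(C)$ surjecting onto $L\rtimes\Gamma$ compatibly. To land exactly on $L\rtimes\Gamma_g$ one takes $\Gamma_g = \pi_1(C)$ acting on $L$ through $\Gamma_g\twoheadrightarrow\Gamma$; the semidirect product $L\rtimes\Gamma_g$ is then the fundamental group of the total space of the abelian-variety family over $C$ obtained by pulling back along $C\to S$, which is a smooth projective variety, so it lies in $\mathcal{P}$. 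For this last part one only needs $C\to S$ to surject on $\pi_1$ and the family over $C$ to have a section, both of which are automatic; the one-ball must still be excluded because there $\bar S$ is a curve and even a generic curve section can fail to surject on $\pi_1(S)$, whereas the two-ball and genus $2$ cases are allowed here since one only needs a \emph{curve} mapping onto $\pi_1(S)$, not a surface isomorphic on $\pi_1$.

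The main obstacle is the codimension-at-least-three verification: one must go through Cartan's list of classical Hermitian factors ($SU(p,q)$, $SO(2,p)^o$, $SO^*(2p)$, $Sp(2g,\R)$), identify the maximal rational boundary components of each via the theory of maximal parabolics, compute the codimension of the largest one, and confirm that it drops below $3$ precisely for the one ball ($SU(1,1)$, boundary = points, codim $1$), the two ball ($SU(1,2)$, boundary curves of codim $1$ and points of codim $2$), and $Sp(4,\R)$ (the genus $2$ Siegel space, where the rank-$1$ boundary stratum has codim $2$) — together with ensuring the boundary estimate is preserved for products and that the excluded-factor hypothesis in the statement is exactly the union of these bad cases. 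I expect this to be reference-chasing through \cite{wk} and \cite{toledo} rather than genuinely new work, but it is the step where the precise numerology must be pinned down.
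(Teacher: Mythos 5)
Your proposal follows essentially the same route as the paper: use the Baily--Borel/Wolf--Kor\'anyi boundary structure (as observed by Toledo) to see that excluding the listed irreducible factors forces the boundary of $\bar S$ to have codimension at least $3$ (at least $2$ when only the one ball is excluded), apply the Goresky--MacPherson weak Lefschetz theorem to produce a smooth projective surface $Z\subset S$ with $\pi_1(Z)\cong \Gamma$ and a smooth projective curve $C\subset S$ with $\pi_1(C)\twoheadrightarrow \Gamma$, and then restrict $\Gamma\backslash \mathcal{U}$ (which has a zero section) to these to compute the fundamental groups as $L\rtimes\Gamma$ and $L\rtimes\Gamma_g$. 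Two minor slips that do not affect the argument: the Baily--Borel boundary of a two-ball quotient consists of finitely many cusp points of codimension $2$ (not boundary curves of codimension $1$), and the one ball must be excluded in the curve statement because its codimension-$1$ boundary prevents a generic linear curve section from avoiding the cusps (indeed $S$ is then an open curve containing no compact curves), rather than because surjectivity on $\pi_1$ fails for a curve that avoids the boundary.
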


To justify the last assertion of  proposition \ref{prop:conv}, we use Shimura curves associated
to quaternion algebras \cite{shimura}.
Let $D$ be an indefinite quaternion division algebra over $\Q$. Concretely, $D$
is a $\Q$ algebra with generators $i,j,k$ and relations $i^2=a$, $j^2=-b$
and $k= ij=-ji$ for rational numbers $a,b>0$. Either $D$ splits which
means that $D= M_2(\Q)$ or $D$ is a division algebra. We want the
latter to hold, and for this it is sufficient to assume that   the projective conic
$ax^2-by^2+z^2=0$ has no rational points. 
After extending scalars, we have an isomorphism of algebras $\psi:D\otimes
\R\cong M_2(\R)$. We have an involution on
$D$ given by  conjugation $\overline{x+y i
  +zj+wk}= x -yi -zj - wk$. Let $G = \{\alpha\in D\mid \alpha\bar
\alpha=1\}$ be viewed as an algebraic group over $\Q$. Under $\psi$ we
have $G(\R)\cong Sl_2(\R)$.  Fix a maximal order
$O \subset D$. For $\tau$ in the upper half plane $\mathbb{H}$,
$A_\tau=\C^2/\psi(O)\begin{pmatrix}\tau\\ 1\end{pmatrix}$ is an
abelian variety. For general $\tau$, its special Mumford-Tate group
is precisely $G$. If $\Gamma_1=G(\Q)\cap O$, Shimura proves that the corresponding Shimura
variety $S=\mathbb{H}_1/\Gamma_1$ is compact. (It is the moduli space of 
abelian surfaces $A_\tau$ having multiplication by $O$.)
Therefore

\begin{lemma}
   If $\Gamma\subset \Gamma_1$ is a torsion free subgroup of finite index,
$O\rtimes \Gamma\in \mathcal{P}$.

\end{lemma}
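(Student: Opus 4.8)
The plan is to take $Y:=\mathbb{H}_1/\Gamma$ together with the family of abelian surfaces over it, and to observe that the construction preceding the general lemma applies here verbatim; the only new ingredient needed is Shimura's theorem that $S=\mathbb{H}_1/\Gamma_1$ is \emph{compact}. That compactness is precisely what lets us dispense with the weak Lefschetz reduction and with the analysis of a boundary used in the general case, and hence precisely what makes the ``one ball'' case --- excluded in the preceding lemma --- go through here.

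In detail: since $\Gamma\subset\Gamma_1$ has finite index and is torsion free, $Y:=\mathbb{H}_1/\Gamma$ is a compact Riemann surface --- being a finite cover of the compact $S=\mathbb{H}_1/\Gamma_1$ of Shimura's theorem --- hence a smooth projective curve with $\pi_1(Y)=\Gamma$. Every $\gamma\in\Gamma_1$ is a unit of $O$ with $\gamma^{-1}=\bar\gamma\in O$, so $\Gamma$ stabilizes the lattice $L=H_1(A_\tau,\Z)=O$, and the quotient construction produces a smooth morphism $f:\Gamma\backslash\mathcal{U}\to Y$ whose fibres are the abelian surfaces $A_\tau$. By the lemma on $\Gamma\backslash\mathcal{U}$ recorded above, $\pi_1(\Gamma\backslash\mathcal{U})=L\rtimes\Gamma=O\rtimes\Gamma$, with $\Gamma$ acting through the prescribed action on $O$. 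It therefore only remains to check that $\Gamma\backslash\mathcal{U}$ is projective, and for this I would use that the fibrewise polarization is $\Gamma$-invariant and so descends to a relatively ample line bundle $\mathcal{M}$ on $\Gamma\backslash\mathcal{U}$; since $Y$ is projective and $f$ is a projective morphism, $\mathcal{M}\otimes f^{*}\mathcal{N}^{\otimes k}$ is ample on $\Gamma\backslash\mathcal{U}$ for $k\gg 0$ and any ample $\mathcal{N}$ on $Y$. Thus $\Gamma\backslash\mathcal{U}$ is smooth projective with fundamental group $O\rtimes\Gamma$, so $O\rtimes\Gamma\in\mathcal{P}$.

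The one genuinely non-formal step is the projectivity of $\Gamma\backslash\mathcal{U}$, whose crux is exactly Shimura's compactness theorem for $S$: once the base curve $Y$ is known to be projective, the relative-ample-tensor-pullback argument is standard. The fundamental-group computation and the identification of the $\Gamma$-action are already packaged in the earlier lemma, and smoothness is immediate from torsion-freeness, so nothing else requires work.
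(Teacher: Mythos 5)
Your proposal is correct and follows the same route the paper intends: Shimura's compactness of $\mathbb{H}_1/\Gamma_1$ makes $Y=\mathbb{H}_1/\Gamma$ a smooth projective curve, the earlier lemma identifies $\pi_1(\Gamma\backslash\mathcal{U})$ with $O\rtimes\Gamma$, and projectivity of the total space is the only point left to check (the paper leaves all of this implicit after ``Therefore''). Your relative-ampleness argument is the standard way to finish; the only cosmetic caveat is that one may need to replace the polarization by a suitable multiple (or argue via a rational K\"ahler class and Kodaira embedding) to ensure the line bundle itself, and not just its class, descends to the quotient.
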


\subsection{Kodaira surfaces}
In the previous examples, we considered only the ``top''  representations
$\tau$ and $\sigma$ because  $\tau^H, \sigma^H$ would give essentially
nothing new.  By contrast, let us a  consider Kodaira
surface. This is  a smooth projective surface   admitting an everywhere
smooth map to a curve $f:X\to C$, such that the fibres are connected
with nonconstant moduli.  Let $g$ and
$q$ denote the genus of $C$ and the fibres respectively.
The fundamental group is a nontrivial  extension
\begin{equation}
  \label{eq:KodairaPi1}
1\to \Gamma_q\to \pi_1(X)\to \Gamma_g\to 1  
\end{equation}
We have an associated homomorphism $\rho:\Gamma_g\to
Mod_q\subset O(\Gamma_q)$
into the genus $q$ mapping class group. We note that this is the sole
invariant, in the sense that the extension \eqref{eq:KodairaPi1} is uniquely
determined by $\rho$. To see this, observe that by a theorem of Eilenberg and
Maclane \cite{em}, the possible extensions with outer action $\rho$ are parameterized by
$H^2(\Gamma_g, Z(\Gamma_q))$, but the centre $Z(\Gamma_q)$ is trivial
\cite[chap 1]{fm}.   
Our main theorems give  strong restrictions on the possible values of  $\im\tau^{H}\circ
\rho$. In particular, an arbitrary $\rho$ will not come from a
Kodaira surface (or any other projective manifold).  Our interest now is in seeing how big these can be.
By assumption the map $C\to M_q$ to the
moduli space of curves is nonconstant. Therefore by Torelli's theorem, it follows that the induced map
$C\to A_q$ is also nonconstant. This forces the variation of Hodge
structure $R^1f_*\Z$ to be nontrivial. In fact, it must have infinite
monodromy. If the monodromy were finite, then we could assume, after a
finite base change, that it is trivial implying that $R^1f_*\Z$ is a
trivial VHS  by \cite[thm 7.24]{schmid}.  We can apply theorem \ref{thm:main1} to strengthen the conclusion.
Therefore we have proved that:

\begin{lemma}
  For any Kodaira surface, the representation  $\tau\circ\rho:\Gamma_g\to Sp_{2q}(\Q)$ has
infinite image. In particular, $\im \rho$ is infinite. Furthermore, the identity component of
the Zariski closure of $\im \tau\circ\rho$ is a nontrivial semisimple group of symplectic Hermitian type.
\end{lemma}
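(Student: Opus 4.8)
The plan is to combine the discussion immediately preceding the lemma with Theorem~\ref{thm:main1}. First I would recall the setup: a Kodaira surface comes with an everywhere-smooth map $f:X\to C$ with connected fibres of genus $q$ and nonconstant moduli, and $f$ is in particular a smooth projective family in the sense of Theorem~\ref{thm:main1} (the base $C$ is quasiprojective, the fibres are smooth projective curves, and we have the nonabelian monodromy $\rho:\Gamma_g=\pi_1(C)\to O(X_c,\omega)$). Taking $H=\Gamma_q$ itself (so $\sigma^H=\tau$) and $\chi$ trivial, the representation $\tau\circ\rho$ is exactly the monodromy of the variation of Hodge structure $R^1f_*\Q$, landing in $Sp_{2q}(\Q)$ via the intersection form.

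The key step is infiniteness of the image. As the excerpt notes, nonconstancy of moduli gives, via Torelli, a nonconstant map $C\to A_q$, hence a nontrivial variation of Hodge structure $R^1f_*\Z$. I would then invoke the rigidity/semisimplicity input cited there: if the monodromy of $\tau\circ\rho$ were finite, a finite \'etale base change $C'\to C$ would trivialize it, and then $R^1f_*\Z$ pulled back to $C'$ would be a trivial VHS by \cite[thm 7.24]{schmid}; but a trivial VHS forces the period map, hence the induced map $C'\to A_q$, to be constant, contradicting nonconstancy of moduli (which persists under finite base change). Therefore $\im(\tau\circ\rho)$ is infinite, and in particular $\rho$ itself has infinite image since $\tau$ is a homomorphism factoring through it.

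Finally, I would apply Theorem~\ref{thm:main1} to the family $f:X\to C$ with $H=\ker\phi=\Gamma_q$ and trivial character: the identity component $Z$ of the Zariski closure of $\im(\tau\circ\rho)$ is semisimple of symplectic Hermitian type. It remains only to note $Z$ is nontrivial: if $Z$ were trivial the Zariski closure of $\im(\tau\circ\rho)$ would be finite, contradicting the infiniteness just established. This completes the argument.

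I expect the only real obstacle to be the infiniteness step, specifically making precise that finite monodromy forces a trivial VHS after base change and hence a constant period map; once that is granted, everything else is a direct citation of Theorem~\ref{thm:main1}. The point to be careful about is that ``nonconstant moduli'' is preserved under finite base change and is equivalent (via Torelli) to nonconstancy of the induced map to $A_q$, so the contradiction is genuine.
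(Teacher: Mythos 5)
Your proposal is correct and follows essentially the same route as the paper: infiniteness via Torelli plus the observation that finite monodromy would trivialize $R^1f_*\Z$ after a finite base change (Schmid's rigidity), and then Theorem~\ref{thm:main1} for the semisimple symplectic Hermitian conclusion, with nontriviality forced by the infinite image. Nothing essential is missing.
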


In order to say more, we need a further assumption.
 Let us say that a Kodaira  surface  is {\em generic} if the image of
$\rho$ has finite index in the mapping class
group.   Although the original examples constructed by Kodaira
\cite{kodaira} are not generic, it is easy to see that
generic Kodaira surfaces exist. Let $M_{q}[n]$ be the fine moduli space of
genus $q$ curve with level $n\ge 3$ structure, and let $M_{q}[n]^*$
denote the Satake compactification. Note that the boundary
$M_{q}[n]^*-M_{q}[n]$ has codimension at least $2$ in $M_{q}[n]^*$
provided that $q>2$.
Therefore a curve $C\subset 
M_{q}[n]^*$  given as an intersection of general ample divisors would lie entirely in $M_{q}[n]$. Let $X\to C$ be the pull back
of the universal family. Then the map on
fundamental groups $\pi_1(C)\to \pi_1(M_{q}[n])$ would be surjective
by  weak Lefschetz \cite[p 153]{gm}. But
 $\pi_1(M_{q}[n])$  is a finite index subgroup of
the mapping class group.  From corollary \ref{cor:mt}, we
see that:

\begin{lemma}
Let  $X\to C$ be  generic with $q\ge 3$ and suppose that $H\subset \Gamma_q$ is a finite index normal
subgroup such that $\Gamma_q/H$ is generated by fewer than $q$ elements.
Then the image
  $\im \tau^{H}\circ \rho$  contains a Zariski dense subgroup of $\prod_\chi \G_{H,\chi}^1$.
\end{lemma}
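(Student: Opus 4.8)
The plan is to reduce everything to Theorem~\ref{thm:main3} (equivalently Corollary~\ref{cor:mt}) applied to the family over $M_q[n]$. First I would set up the curve $C\subset M_q[n]^*$ exactly as in the paragraph preceding the statement: take $C$ to be the intersection of sufficiently general very ample divisors on the Satake compactification $M_q[n]^*$, of dimension one. Since $q\ge 3$ the boundary $M_q[n]^*-M_q[n]$ has codimension $\ge 2$, so $C$ misses it and lies in $M_q[n]$; moreover by the weak Lefschetz theorem for $\pi_1$ \cite[p 153]{gm}, the inclusion induces a surjection $\pi_1(C)\twoheadrightarrow\pi_1(M_q[n])$. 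Recall $\pi_1(M_q[n])$ is a finite-index (congruence) subgroup of the mapping class group $Mod_q$. Pulling back the universal curve gives $X\to C$, and this is a Kodaira surface with genus-$g$ base $C$; its monodromy $\rho\colon\Gamma_g=\pi_1(C)\to Mod_q\subset O(\Gamma_q)$ is, by construction, the composite $\pi_1(C)\twoheadrightarrow\pi_1(M_q[n])\hookrightarrow Mod_q$. (One checks this is generic in the sense defined, since the image has finite index in $Mod_q$.)

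The second step is to compare the representation $\tau^H\circ\rho$ of $\pi_1(C)$ with the representation $\tau^H$ of $\pi_1(M_q[n])$. Since $\rho$ factors through $\pi_1(C)\twoheadrightarrow\pi_1(M_q[n])$ and $\pi_1(M_q[n])$ has finite index in $Mod_q=\Aut(\Gamma_q)/\InnerAut$, after possibly shrinking domains as permitted by the discussion of partial representations, $\tau^H\circ\rho$ is the pullback along a surjection of the partial representation $\tau^H$ restricted to (a finite-index subgroup of) $\pi_1(M_q[n])$. Hence $\im(\tau^H\circ\rho)$ has the same Zariski closure as $\im\bigl(\tau^H|_{\pi_1(M_q[n])}\bigr)$, up to finite index; in particular its Zariski closure is a finite-index-commensurable copy of the Zariski closure of $\im\tau^H$ over the moduli space.

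The third step is to invoke Corollary~\ref{cor:mt}: under the hypothesis $q\ge 3$ and $G=\Gamma_q/H$ generated by fewer than $q$ elements (equivalently, $r\colon\Gamma_q\to G$ is redundant), the identity component of the Zariski closure of $\im\tau^H$ is $\prod_\chi\G_{H,\chi}^1$. Combining this with the previous step, the Zariski closure of $\im(\tau^H\circ\rho)$ has the same identity component $\prod_\chi\G_{H,\chi}^1$, so $\im(\tau^H\circ\rho)$ is Zariski dense in $\prod_\chi\G_{H,\chi}^1$, which is the assertion. (Decomposing $\tau^H=\sum_\chi\tau^{H,\chi}$ as in Section~4 then also records the factorwise statement.)

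The main obstacle I expect is purely bookkeeping: making precise the compatibility in the second step between the partial representation $\tau^H$ of $\Out(\Gamma_q)$ — whose domain $\Stab(H)$ is only a finite-index subgroup of $\Aut(\Gamma_q)$ — and its pullback under the surjection $\pi_1(C)=\Gamma_g\twoheadrightarrow\pi_1(M_q[n])$ composed with $\rho$, so that ``Zariski density'' transfers correctly across the finite-index ambiguities. All of this is handled by the commensurability conventions for partial representations set up early in the paper, so no genuinely new argument is needed beyond carefully tracking domains; the geometric input (existence of the generic Kodaira surface with base $C$) has already been established above, and the group-theoretic input is exactly Corollary~\ref{cor:mt}.
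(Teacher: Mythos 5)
Your core argument is the paper's: combine Corollary \ref{cor:mt} with the fact that passing to a finite-index subgroup does not change the identity component of the Zariski closure of the image (the paper's ``proof'' is nothing more than this plus the preceding discussion). But as written your proof has a scope problem. The lemma takes as hypothesis an \emph{arbitrary} generic Kodaira surface $X\to C$, i.e.\ one whose monodromy $\rho:\pi_1(C)\to Mod_q$ has image of finite index in the mapping class group; the construction of a curve inside $M_q[n]$ by intersecting general ample divisors in the Satake compactification is only the paper's device for showing that generic Kodaira surfaces \emph{exist}, and is not part of the proof of the lemma. Your second step, however, relies on the factorization $\rho=\bigl(\pi_1(C)\twoheadrightarrow\pi_1(M_q[n])\hookrightarrow Mod_q\bigr)$, which holds ``by construction'' only for that particular pulled-back family: for a general generic Kodaira surface the classifying map lands in the moduli orbifold, and $\im\rho$ need not be contained in the finite-index subgroup $\pi_1(M_q[n])$, so no such factorization is available. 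Read literally, your argument therefore proves the statement only for the examples manufactured in the preceding paragraph, not under the stated hypothesis.

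The repair is short and uses only the hypothesis of genericity: since $\im\rho$ has finite index in $Mod_q$ and $\Dom(\tau^H)$ also has finite index (being the image of $\Stab(H)$), the group $\Lambda=\im\rho\cap\Dom(\tau^H)$ has finite index in $\Dom(\tau^H)$; hence $\tau^H(\Lambda)$ has finite index in $\im\tau^H$, so their Zariski closures have the same identity component, which is $\prod_\chi\G_{H,\chi}^1$ by Corollary \ref{cor:mt}. Two further small points. First, the conclusion should be exactly as in the lemma: $\im(\tau^H\circ\rho)$ \emph{contains} a Zariski dense subgroup of $\prod_\chi\G_{H,\chi}^1$ (for instance $\tau^H(\Lambda)\cap\prod_\chi\G_{H,\chi}^1$), rather than ``is Zariski dense in'' it, since the closure of the full image may be disconnected and strictly larger than the identity component. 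Second, your parenthetical ``equivalently, $r$ is redundant'' is not quite right: generation of $\Gamma_q/H$ by fewer than $q$ elements is equivalent to the existence of \emph{some} redundant surjection onto the quotient, not to redundancy of the given $r$; this is harmless here because Corollary \ref{cor:mt} is stated with precisely the lemma's hypothesis.
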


\end{document}